\newcommand\nthalias[1]{\AddToHook{env/#1/begin}{\crefalias{lemma}{#1}}}
\crefname{section}{Section}{Sections}
\crefname{subsection}{\S}{\S\S}
\crefname{subsubsection}{\S}{\S\S}
\theoremstyle{plain}
\newtheorem{lemma}{Lemma}[section]
\newtheorem{proposition}[lemma]{Proposition}
\newtheorem{corollary}[lemma]{Corollary}
\newtheorem{theorem}[lemma]{Theorem}
\theoremstyle{plain}
\newtheorem{theoremN}{Theorem}
\theoremstyle{plain}
\newtheorem{definition}[lemma]{Definition}
\newtheorem{example}[lemma]{Example}
\newtheorem{remark}[lemma]{Remark}
\newtheorem{remarks}[lemma]{Remarks}
\crefname{definition}{definition}{definitions}
\crefname{ex}{example}{examples}
\crefname{exs}{example}{examples}
\crefname{remark}{remark}{remarks}
\crefname{remarks}{remark}{remarks}
\crefname{convention}{convention}{conventions}
\crefname{notation}{notation}{notations}
\crefname{table}{table}{tables}
\crefname{lemma}{lemma}{lemmas}
\crefname{proposition}{proposition}{propositions}
\crefname{propositionN}{proposition}{propositions}
\crefname{corollary}{corollary}{corollaries}
\crefname{corollaryN}{corollary}{corollaries}
\crefname{theorem}{theorem}{theorems}
\crefname{theoremN}{theorem}{theorems}
\crefname{enumi}{}{}
\crefname{assumption}{assumption}{Assumptions}
\crefname{construction}{construction}{Constructions}
\crefname{question}{question}{Questions}
\crefname{equation}{}{}
\numberwithin{equation}{section}
\theoremstyle{nonumberplain}
\newtheorem{proof}{Proof}
\newcommand\pf[1]{\newtheorem{#1}{Proof of \Cref{#1}}}
\newcommand\bA{{\mathbb A}}
\newcommand\bC{{\mathbb C}}
\newcommand\bD{{\mathbb D}}
\newcommand\bG{{\mathbb G}}
\newcommand\bH{{\mathbb H}}
\newcommand\bK{{\mathbb K}}
\newcommand\bL{{\mathbb L}}
\newcommand\bM{{\mathbb M}}
\newcommand\bN{{\mathbb N}}
\newcommand\bP{{\mathbb P}}
\newcommand\bR{{\mathbb R}}
\newcommand\bS{{\mathbb S}}
\newcommand\bT{{\mathbb T}}
\newcommand\bU{{\mathbb U}}
\newcommand\bZ{{\mathbb Z}}
\newcommand\fa{{\mathfrak a}}
\newcommand\fg{{\mathfrak g}}
\newcommand\fl{{\mathfrak l}}
\newcommand\fn{{\mathfrak n}}
\DeclareMathOperator{\Ad}{Ad}
\DeclareMathOperator{\id}{id}
\DeclareMathOperator{\End}{\mathrm{End}}
\DeclareMathOperator{\Hom}{\mathrm{Hom}}
\DeclareMathOperator{\Aut}{\mathrm{Aut}}
\DeclareMathOperator{\im}{\mathrm{im}}
\newcommand{\cat}[1]{\textsc{#1}}
\newcommand\spr[1]{\cite[\href{https://stacks.math.columbia.edu/tag/#1}{Tag {#1}}]{stacks-project}}
\newcommand{\qedhere}{\mbox{}\hfill\ensuremath{\blacksquare}}
\newcommand{\xrightarrowdbl}[2][]{%
  \xrightarrow[#1]{#2}\mathrel{\mkern-14mu}\rightarrow
}
\title{Pervasive ellipticity in locally compact groups}
\author{Alexandru Chirvasitu}
\begin{document}

\date{}

\newcommand{\Addresses}{{
  \bigskip
  \footnotesize

  \textsc{Department of Mathematics, University at Buffalo}
  \par\nopagebreak
  \textsc{Buffalo, NY 14260-2900, USA}  
  \par\nopagebreak
  \textit{E-mail address}: \texttt{achirvas@buffalo.edu}


}}

\maketitle

\begin{abstract}
  A topological group is (openly) almost-elliptic if it contains a(n open) dense subset of elements generating relatively-compact cyclic subgroups. We classify the (openly) almost-elliptic connected locally compact groups as precisely those with compact maximal semisimple quotient and whose maximal compact subgroups act trivial-weight-freely on the Euclidean quotients of the closed derived series' successive layers. In particular, an extension of a compact connected group $\mathbb{K}$ by a connected, simply-connected solvable Lie group $\mathbb{L}$ is (openly) almost-elliptic precisely when the weights of the $\mathbb{K}$-action on $Lie(\mathbb{L})$ afforded by the extension are all non-trivial.
\end{abstract}

\noindent \emph{Key words:
  Lie group;
  almost-connected;
  elliptic element;
  maximal compact;
  maximal pro-torus;
  relatively compact;
  solvable radical;
  weight
}

\vspace{.5cm}

\noindent{MSC 2020: 22E15; 54D30; 22D05; 22D12; 22E25; 22E60; 22E40; 17B05


}


\section*{Introduction}

The paper is concerned with the structure theory of Lie or, more generally, locally compact groups (always assumed Hausdorff); specifically, we will be interested in phenomena attendant to having fairly large subsets consisting of elements that generate relatively compact subgroups. The following definition collects some of the terms featuring prominently below.

\begin{definition}\label{def:ell}
  \begin{enumerate}[(1),wide]
  \item\label{item:def:ell:ell.el} Following \cite[\S IX.7]{helg_dglgssp}, we refer to an element of a topological group as \emph{elliptic} if it generates a relatively compact subgroup. These are the \emph{compact} elements of \cite[Definition 2.1(b)]{2410.08083v1} (also \cite[Definition 1.14]{MR1002207}), where \emph{elliptic} has a slightly different meaning. 
    
  \item\label{item:def:ell:ell.set} A subset $S\subseteq \bG$ of a topological group is \emph{pointwise-elliptic} if all of its elements are.

  \item\label{item:def:ell:alm.ell.set} $S$ is \emph{almost-elliptic} if it contains a pointwise-elliptic set dense in $S$.

  \item\label{item:def:ell:op.alm.ell.set} And $S$ is \emph{openly almost-elliptic} if it contains a pointwise-elliptic set dense and open in $S$.
  \end{enumerate}
\end{definition}

Most of the material below revolves around characterizing and classifying connected locally compact almost-elliptic groups in the sense of \Cref{def:ell}\Cref{item:def:ell:alm.ell.set}. There are several angles to the original motivation (some aspects of which are by now quite distant from the actual content of the paper).

In part, these considerations arose out of the desire to study (in \cite{Chirvasitu2024}, initially), for a \emph{Banach Lie group} \cite[Definition IV.1]{neeb-inf} $\bG$, the space of closed (Lie, compact, various other constraints) subgroups $\bH\le \bG$ equipped with the \emph{upper Vietoris topology} \cite[\S 1.2]{ct_vietoris}: a system of neighborhoods around $\bH\le \bG$ consists of
\begin{equation*}
  \left\{\text{closed, Lie, etc. }\bH'\le \bG\ :\ \bH'\subseteq U\right\}
  ,\quad
  \text{varying neighborhoods }U=\overset{\circ}{U}\supseteq \bH.
\end{equation*}
This is not the only sensible candidate for a topology that will roughly serve the same purposes: one might, for instance, range only over neighborhoods $U$ of the form $\bH\cdot V$ for symmetric ($V=V^{-1}$) neighborhoods of $1\in \bG$ (as \cite[Definition 3.1]{Chirvasitu2024} does). When sampling only \emph{compact} subgroups of $\bG$ the two obviously coincide, but it is not clear that this is so for for \emph{non}-compact (Lie, say) subgroups $\bH\le \bG$.

In that connection, in attempting to deduce structure constraints on subgroups $\bH'\le \bG$ ``asymptotically very close'' to non-compact $\bH\le \bG$, one might naturally expect that such groups, if non-compact, contain discrete copies of $\bZ$ that are similarly asymptotically close to $\bH$. A dearth of such discrete subgroups, on the other hand, would translate to elements of $\bH'$ having a propensity to generate relatively compact subgroups: something akin to the almost-ellipticity of \Cref{def:ell}\Cref{item:def:ell:alm.ell.set}.

On a different but related note, there are connections to the vast literature on and around the celebrated \emph{Jordan theorem} (\cite[p.91]{MR1581645}, \cite[\S 1, statement I]{zbMATH03223755}, \cite[Theorem 8.29]{ragh}, \cite[Theorem 2]{MR3830471}, etc. etc.): finite subgroups of connected finite-dimensional Lie groups $\bG$ have normal abelian subgroups of index bounded by some constant depending only on $\bG$.

In the process of extending \cite[Theorem]{zbMATH03510594} Jordan's theorem to \emph{torsion} subgroups, Lee shows \cite[Lemma 2]{zbMATH03510594} that torsion subgroups of connected Lie groups are \emph{relatively compact} (i.e. \cite[\S 0.3, pre Theorem 3.9]{ks_diff} have compact closure). Rephrasing slightly awkwardly: if $\Gamma\le \bG$ is a subgroup of a connected Lie group with all cyclic subgroups of $\Gamma$ finite, then $\Gamma$ is relatively compact. Relaxing the finiteness of the cyclic subgroups of $\Gamma$ to relative compactness is at that point a fairly natural move, and one easily conducive to raising the question of what connected Lie groups with dense pointwise-elliptic subsets might look like (sample question: must they be compact?). 

A brief reminder on assorted vocabulary will help summarize some of the main results.
\begin{itemize}[wide]
\item A \emph{pro-torus} \cite[Definitions 9.30]{hm5} is an abelian compact connected group.

\item Given a representation of a topological group $\bK$ on a finite-dimensional complex vector space $V$ (in symbols: $\bG\xrightarrow{\rho}GL(V)$ or $\rho:\bG\circlearrowright V$), a \emph{weight} \cite[Definition II.8.2]{btd_lie_1995} of $\rho$ with respect to a compact abelian $\bA\le \bG$ (sometimes \emph{$\bA$-weight of $\rho$} for short, or variants such as \emph{$\bA$-$\rho$-weight}) is a character
  \begin{equation*}
    \chi\in \widehat{\bA}:=\Hom(\bA,\bS^1)
    \quad\left(\text{\emph{Pontryagin dual} \cite[pp.ix-x]{hm5}}\right)
  \end{equation*}
  appearing as a summand of the restriction $\rho|_{\bA}$.

\item We extend the weight vocabulary to \emph{real} representations $\rho:\bK\circlearrowright V$ by first passing to their respective complexifications
  \begin{equation*}
    \rho_{\bC}:=\rho\otimes \id
    \quad:\quad
    \bK
    \quad
    \circlearrowright
    \quad
    V_{\bC}:=V\otimes_{\bR}\bC.
  \end{equation*}  
\end{itemize}

We denote collectively by $\Ad$ the usual \cite[\S III.3.12]{bourb_lie_1-3} \emph{adjoint} (conjugation) actions of subgroups of $\bG$ on either subgroups thereof or Lie algebras of such, relying on context to specify the various parameters. 

\begin{theoremN}\label{thn:conn.cpct.by.ss.solv}
  Let
  \begin{equation}\label{eq:k.by.l}
    \{1\}
    \to
    \bL
    \lhook\joinrel\xrightarrow{\quad}
    \bG
    \xrightarrowdbl{\quad}
    \bK
    \to
    \{1\}
  \end{equation}
  be an extension of a compact connected group $\bK$ by solvable Lie, connected, simply-connected $\bL$. The following conditions are equivalent. 

  \begin{enumerate}[(a)]

  \item\label{item:thn:conn.cpct.by.ss.solv:kfr.dns} For any (equivalently, all) maximal compact $\bM\le \bG$, the subset
    \begin{equation*}
      \bM_{\Ad,fr}
      :=
      \left\{s\in \bM\ :\ \Ad_s-1\in GL(\fl:=Lie(\bL))\text{ is invertible}\right\}
      \subseteq \bM
    \end{equation*}
    of elements acting freely on $\fl^{\times}:=\fl\setminus \{0\}$ is dense.

  \item\label{item:thn:conn.cpct.by.ss.solv:kfr.clstr} For any (all) $1\in \bM$ is a \emph{cluster point} \cite[Definition 4.9]{wil_top} of $\bM_{\Ad,fr}$.
    
  \item\label{item:thn:conn.cpct.by.ss.solv:tfr.dns} The set $\bT_{\Ad,fr}\subseteq \bT$ is dense for every (equivalently, any one) maximal pro-torus $\bT\le \bM$ of any (every) maximal compact $\bM\le \bG$.

  \item\label{item:thn:conn.cpct.by.ss.solv:tfr.clstr} $1\in \bT$ is a cluster point for $\bT_{\Ad,fr}$ for every (any) maximal pro-torus $\bT\le \bM$.
    
  \item\label{item:thn:conn.cpct.by.ss.solv:ntw} The weights of $\Ad:\bM\circlearrowright \fl$ with respect to every (any) maximal pro-torus $\bT\le \bM$ are all non-trivial. 

  \item\label{item:thn:conn.cpct.by.ss.solv:dns.ell} $\bG$ is almost-elliptic in the sense of \Cref{def:ell}\Cref{item:def:ell:alm.ell.set}.

  \item\label{item:thn:conn.cpct.by.ss.solv:op.dns.ell} $\bG$ is openly almost-elliptic in the sense of \Cref{def:ell}\Cref{item:def:ell:op.alm.ell.set}.
  \end{enumerate}
\end{theoremN}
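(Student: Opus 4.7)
The plan is to divide the seven conditions into two blocks — (a)–(e), internal to $\bM$ and its adjoint action on $\fl$, and (f), (g) — and bridge them via an explicit description of the elliptic locus in $\bG$. A standing preliminary is that the extension \eqref{eq:k.by.l} splits as $\bG \cong \bM \ltimes \bL$: since $\bL$, simply-connected and solvable Lie, contains no nontrivial compact subgroup, any maximal compact $\bM \le \bG$ maps isomorphically onto $\bK$, and all such $\bM$'s are $\bL$-conjugate.

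For the equivalences (a)–(e), let $\chi_1, \dots, \chi_n \in \widehat{\bT}$ enumerate the $\bT$-weights of $\fl_{\bC}$; then $\bT_{\Ad,fr} = \bT \setminus \bigcup_i \ker \chi_i$ is an open conjugation-invariant set, empty as soon as some $\chi_i$ is trivial and open-dense otherwise (a proper closed subgroup of the compact connected $\bT$ is nowhere dense). Conjugacy of maximal pro-tori gives $\bM = \bigcup_g g\bT g^{-1}$ and $\bM_{\Ad,fr} = \bigcup_g g\bT_{\Ad,fr}g^{-1}$, so density transfers transparently between (a) and (c); since $1 \in \bigcap_i \ker \chi_i$, the cluster-point formulations (b) and (d) collapse to the density ones under the same dichotomy.

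For (e) $\Rightarrow$ (g), the candidate open dense pointwise-elliptic subset is $\bM_{\Ad,fr} \cdot \bL \subseteq \bG$, whose open-denseness is inherited from $\bM_{\Ad,fr}$. Ellipticity of $(m, \ell)$ with $m \in \bM_{\Ad,fr}$ reduces to the \emph{displacement lemma}: the map $\psi_m \colon \bL \to \bL$, $n \mapsto n \cdot m(n)^{-1}$, is a diffeomorphism whenever $\Ad_m - 1 \in GL(\fl)$, for then $n := \psi_m^{-1}(\ell)$ yields $(1, n)(m, 1)(1, n)^{-1} = (m, \ell)$, placing $(m, \ell)$ in a compact conjugate of $\bM$. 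Injectivity of $\psi_m$ reduces to $\bL^m = \{1\}$: writing $n \in \bL^m$ uniquely as $\exp(w)$ via the $\exp$-diffeomorphism on simply-connected solvable $\bL$, the fixed-point condition becomes $\Ad_m w = w$, forcing $w = 0$. Surjectivity I would prove by induction on the derived length of $\bL$: the abelian base identifies $\psi_m$ with the linear isomorphism $1 - \Ad_m$, and the inductive step lifts a preimage through $1 \to [\bL, \bL] \to \bL \to \bL/[\bL, \bL] \to 1$, using that $\Ad_m - 1$ restricts invertibly to every $m$-stable subspace of $\fl$.

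The implication (g) $\Rightarrow$ (f) is tautological. For (f) $\Rightarrow$ (e), by contraposition: assume some $\chi_0 \in \widehat{\bT}$ is trivial and pick $0 \ne v \in \fl^{\bT}$. For any $t \in \bT$ the identity $\Ad_t v = v$ gives $t(\exp(v)) = \exp(v)$ and $(t, \exp(v))^n = (t^n, \exp(nv))$, unbounded because $\bL$ has no nontrivial compact subgroup; so $(t, \exp(v))$ is non-elliptic. To upgrade this to an open non-elliptic subset of $\bG$ I would combine (i) the conjugation identity $(g, 1)(t', \ell')(g, 1)^{-1} = (gt'g^{-1}, g(\ell'))$, which for $g$ near $1$ reduces perturbations $t' \in \bM$ to perturbations $\tilde t \in \bT$; and (ii) a Baker--Campbell--Hausdorff expansion of the $n$-th power within $\bT \ltimes \bL$, showing that its $\bL$-part carries a dominant $nv$ term with correction terms of strictly smaller order, uniformly for small perturbations. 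The main obstacle I anticipate is point (ii) when the trivial weight lives in a deep layer of the derived series rather than in $\fl/[\fl, \fl]$: BCH brackets among perturbation vectors and $v$ can in principle generate $n$-growing terms that must be controlled, and the argument naturally fragments into a derived-series bookkeeping that parallels the surjectivity induction underpinning (e) $\Rightarrow$ (g).
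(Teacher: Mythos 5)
Your overall architecture (splitting $\bG\cong\bL\rtimes\bK$, the weight/conjugacy treatment of (a)--(e), the description of the elliptic locus, and a displacement map for (e)$\Rightarrow$(g)) matches the paper's, but two steps have genuine gaps. The serious one is (f)$\Rightarrow$(e). You base the contrapositive at an \emph{arbitrary} $t\in\bT$ together with $0\ne v\in\fl^{\bT}$ and hope to produce an elliptic-free neighborhood of $(\exp v,t)$ by a BCH/power-growth estimate uniform in small perturbations. That uniformity is false in general: such points can be honest limits of elliptic elements. Take $\bL$ the Heisenberg group (elements $(a,c)$, $a\in\bC$, $c\in\bR$, central coordinate $c$) and $\bK=SO(2)$ rotating the $a$-plane and fixing the center, so the trivial weight occurs. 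The displacement of $(a,0)$ by the rotation $s_\theta$ is $\bigl((e^{i\theta}-1)a,\ -\tfrac12|a|^2\sin\theta\bigr)$, so choosing $\theta\to 0^-$ and $|a_\theta|^2=2c/|\sin\theta|$ yields elliptic elements converging to $\bigl((0,c),1\bigr)$, i.e.\ to your point with $t=1$, $v=cZ$ --- a non-elliptic point (its powers are $((0,nc),1)$) lying in the closure of the elliptic set. So ``non-elliptic via unbounded powers'' does not upgrade to an open non-elliptic set at your chosen basepoints, and the obstacle you flag in (ii) is not bookkeeping but exactly where the route collapses. The missing idea is the paper's: argue by induction on the closed derived series (abelian base = \Cref{th:no.triv.wt}), and base the obstruction at a \emph{non-degenerate} $s$, i.e.\ one of minimal-dimensional fixed space, so that lower semicontinuity of rank gives a local splitting of the (fixed-point)/(displacement-image) bundles over a neighborhood of $s$, forcing displacement images of nearby elements to stay uniformly away from a fixed direction in the bottom derived layer. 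In the Heisenberg example this is why one must witness non-approximability at $s_\theta$, $\theta\ne 0$, not at $t=1$.

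In (e)$\Rightarrow$(g) your displacement lemma is the right key step (it is the paper's \Cref{le:all.cbdr}, fed into \Cref{le:alm.surj.cbd}), but your sketch has two defects. First, injectivity via ``the $\exp$-diffeomorphism on simply-connected solvable $\bL$'' is based on a false premise: $\exp$ need not be injective or surjective for simply-connected solvable groups (the paper's \Cref{re:solv.bad.exp} recalls $\bR^2\rtimes\bR$ with the rotation action); fortunately only surjectivity of the displacement map is used, so this is harmless. Second, the surjectivity induction through $1\to[\bL,\bL]\to\bL\to\bL/[\bL,\bL]\to 1$ is incomplete as stated: after lifting a preimage modulo $[\bL,\bL]$ with defect $p$, the correction to be solved on $[\bL,\bL]$ is a \emph{twisted} displacement equation for $\Ad_p\circ\varphi$, and invertibility of $1-\Ad_p\,d\varphi$ on $[\fl,\fl]$ does not follow from invertibility of $1-d\varphi$ (``restricts invertibly to every $m$-stable subspace'' does not address the $\Ad_p$ twist; in the rotation examples $\Ad_p$ can cancel $d\varphi$ on the relevant subspace). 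The paper circumvents this by inducting on the bottom layer $\bA$ of the derived series and using the identity $\delta(as)=\Ad_{s^{-1}}(\delta a)\cdot\delta(s)$, which shows $\im\delta$ is a union of $\bA$-cosets and hence all of $\bL$ once it surjects onto $\bL/\bA$; some such device (or an openness/properness argument) is needed to close your induction. The block (a)--(e), handled via the weight-kernel dichotomy on $\bT$ and conjugacy of maximal pro-tori, is correct and is essentially the paper's \Cref{pr:rep.content}.
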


We eventually supersede that result in \Cref{thn:char.conn.alm.ell.lie}, whose proof however builds on that of the earlier theorem. Some more pertinent notation pertaining to a connected locally compact $\bG$:
\begin{itemize}[wide]
\item $\bG_{sol}\le \bG$ is its \emph{(pro-solvable) radical} \cite[Definition 10.23]{hm_pro-lie-bk}: the largest connected normal subgroup that is \emph{pro-solvable} in the sense \cite[Definition 10.12]{hm_pro-lie-bk} that its Lie quotients are solvable.

\item Following \cite[Definition 10.8]{hm_pro-lie-bk}, we write
  \begin{equation*}
    \bG=
    \bG^{((0))}
    \ge
    \bG^{((1))}
    \ge
    \cdots    
  \end{equation*}
  for the (possibly transfinite) \emph{closed derived series}, defined recursively for successor ordinals $\alpha+1$ by $\bG^{((\alpha+1))}=\overline{[\bG^{((\alpha))},\bG^{((\alpha))}]}$ (with $[-,-]$ denoting, as usual, the subgroup generated by commutators).

  $\bG^{((n))}$ is (for finite $n$, which are the only indices we will in fact need) what \cite[\S III.9.1]{bourb_lie_1-3} denotes by $\overline{D^n}\bG$.

\item We use German characters in the obvious fashion for the \emph{pro-Lie algebras} \cite[Definition 3.6 and Theorem 3.12]{hm_pro-lie-bk} respectively attached to locally compact connected groups: $\fg=Lie(\bG)$, $\fl=Lie(\bL)$, etc.
  
\item For a locally compact connected abelian group $\bG$, by necessity (\cite[Theorem 4.2.2]{de}, \cite[\S 4.13, first Theorem]{mz}) of the form $\bR^d\times \bK$ for compact connected abelian $\bK$, we write $\fg_{nc}$ for the Lie algebra of the quotient $\bR^d\cong \bG/\bK$. 
\end{itemize}


\begin{theoremN}\label{thn:char.conn.alm.ell.lie}
  Let $\bG$ be a connected locally compact group.
  \begin{enumerate}[(1)]
  \item\label{item:thn:char.conn.alm.ell.lie:glob} $\bG$ is almost-elliptic if and only if
    \begin{enumerate}[(a)]
    \item\label{item:thn:char.conn.alm.ell.lie:ss.cpct} the semisimple quotient $\bG/\bG_{sol}$ is compact;

    \item\label{item:thn:char.conn.alm.ell.lie:lyrs.act} and for every (equivalently, any one) maximal compact subgroup $\bK\le \bG$ the representations $\Ad:\bK \circlearrowright \left(\fg^{((n))}_{sol}/\fg^{((n+1))}_{sol}\right)_{nc}$ are trivial-weight-free.
    \end{enumerate}
    
  \item\label{item:thn:char.conn.alm.ell.lie:loc} Equivalently, in \Cref{item:thn:char.conn.alm.ell.lie:lyrs.act} it is enough to consider the respective adjoint action of maximal compact subgroups of $\bG_{((n+1))}:=\bG/\bG^{((n+1))}_{sol}$ on $\left(\fg^{((n))}_{sol}/\fg^{((n+1))}_{sol}\right)_{nc}$.

  \item\label{item:thn:char.conn.alm.ell.lie:auto.opn} Moreover, if almost-elliptic then $\bG$ is in fact openly so. 
  \end{enumerate}
\end{theoremN}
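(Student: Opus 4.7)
The plan is to reduce to \Cref{thn:conn.cpct.by.ss.solv} via a two-stage stratification of $\bG$: first isolating the pro-semisimple quotient $\bG/\bG_{sol}$, and then traversing the closed derived series of the pro-solvable radical through the successive quotients $\bG_{((n+1))}$ and the abelian layers $A_n := \bG^{((n))}_{sol}/\bG^{((n+1))}_{sol}$. The guiding principle is that almost-ellipticity is preserved by continuous surjections (the image of a pointwise-elliptic dense set is again pointwise-elliptic dense), and this renders the necessity of \Cref{item:thn:char.conn.alm.ell.lie:ss.cpct} and \Cref{item:thn:char.conn.alm.ell.lie:lyrs.act} tractable via quotient-descent to single-layer extensions governed by \Cref{thn:conn.cpct.by.ss.solv}.

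For \Cref{item:thn:char.conn.alm.ell.lie:ss.cpct}, I would argue that a connected pro-semisimple locally compact group admitting a dense pointwise-elliptic subset must be compact. In the non-compact finite-dimensional Lie case this is standard: the Iwasawa $KAN$ decomposition (with $A$ non-trivial) yields a dense open subset of elements whose $A$-component is non-trivial, all of which generate non-relatively-compact cyclic subgroups. The pro-Lie case then reduces to this one by passing to a Lie quotient inheriting non-compactness.

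For the necessity of \Cref{item:thn:char.conn.alm.ell.lie:lyrs.act}, granted \Cref{item:thn:char.conn.alm.ell.lie:ss.cpct}, pass to $\bG_{((n+1))}$, which inherits almost-ellipticity. The normal subgroup $A_n$ is connected abelian locally compact, decomposing as $\bR^{d_n}\times \bK_n$ with $\bK_n$ its maximal compact subgroup; $\bK_n$ is characteristic in $A_n$ and therefore normal in $\bG_{((n+1))}$. Iteratively quotienting by such compact toral pieces along the filtration delivers an extension of a compact connected group by a connected, simply-connected solvable Lie group — precisely the framework of \Cref{thn:conn.cpct.by.ss.solv}. Applying condition \Cref{item:thn:conn.cpct.by.ss.solv:ntw} of that theorem to the reduced extension then yields non-trivial weights of the maximal-compact adjoint action on $\left(\fg^{((n))}_{sol}/\fg^{((n+1))}_{sol}\right)_{nc}$. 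The local refinement \Cref{item:thn:char.conn.alm.ell.lie:loc} comes for free, since this necessity argument only ever used the quotient $\bG_{((n+1))}$.

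For sufficiency together with the automatic open-ness \Cref{item:thn:char.conn.alm.ell.lie:auto.opn}, I would induct on the length of the closed derived series (transfinitely if needed, with an inverse-limit compactness argument at limit ordinals). The base case — compact $\bG$ — is immediate. For the inductive step, combine the openly dense pointwise-elliptic subset of $\bG_{((n))}$ afforded by induction with the freely-acting elements produced by \Cref{thn:conn.cpct.by.ss.solv}\Cref{item:thn:conn.cpct.by.ss.solv:kfr.dns} applied to the top layer; these lift through the extension $1\to A_n \to \bG_{((n+1))}\to \bG_{((n))}\to 1$ to an openly dense pointwise-elliptic subset of $\bG_{((n+1))}$ because the weight condition forces free maximal-compact action on $\fa_n$ and hence relative compactness of the resulting cyclic orbits. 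The principal technical obstacle will be keeping the transitions between the full pro-Lie framework and the simply-connected Lie framework of \Cref{thn:conn.cpct.by.ss.solv} uniform: stripping off each $\bK_n$ without disturbing the weight data on $(\fa_n)_{nc}$, and controlling the induction at limit ordinals via a careful inverse-limit argument.
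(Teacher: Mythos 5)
The necessity direction of \Cref{item:thn:char.conn.alm.ell.lie:lyrs.act} is where your proposal has a genuine gap. Your only transfer mechanism is that almost-ellipticity passes to quotients, plus the claim that ``iteratively quotienting by the compact toral pieces along the filtration'' puts $\bG_{((n+1))}$ into the framework of \Cref{thn:conn.cpct.by.ss.solv}. But only the compact part of the \emph{bottom} layer corresponds to a compact normal subgroup of the ambient group (being characteristic in the normal subgroup $\bG^{((n))}_{sol}/\bG^{((n+1))}_{sol}\trianglelefteq\bG_{((n+1))}$); the compact parts of the higher layers $A_j$, $j<n$, are compact subgroups of \emph{subquotients} and need not be images of any compact normal subgroup of $\bG_{((n+1))}$. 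Already $\bG=\bR^2\rtimes\bS^1$ (rotation action) has top layer $A_0\cong\bS^1$ compact while $\bG$ has no non-trivial compact normal subgroup at all, so the proposed stripping cannot be performed. Whether $\bG_{((n+1))}$ is, modulo a compact normal subgroup, an extension of a compact connected group by a connected simply-connected solvable Lie group is a non-trivial straightening assertion of exactly the kind \Cref{re:solv.bad.exp} flags as delicate; you neither prove it nor cite anything for it, and the paper deliberately avoids needing it.

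More fundamentally, the proposal contains no substitute for the ingredient the forward implication actually requires: a \emph{downward} permanence to subgroups. The paper quotients only by the compact part of the bottom layer (which is legitimately normal), so that layer becomes a vector group $\bL\trianglelefteq\bG$, and then invokes \Cref{th:ext.by.rd.perm} --- itself resting on the conjugation-selection result \Cref{th:sel.cj} and a spectral-continuity argument --- to conclude that the closed, non-open subgroup $\pi^{-1}(\bK)\cong\bL\rtimes\bK$ inherits almost-ellipticity from $\bG$; only then does \Cref{th:no.triv.wt} yield the weight condition on that layer. Density of elliptic elements in $\bG$ does not formally restrict to $\bL\rtimes\bK$: the elliptic witnesses near a point of that subgroup may lie outside it, and conjugating them back in uniformly by small elements is precisely the content of \Cref{th:sel.cj}, which ``preserved by continuous surjections'' cannot replace. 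Your sufficiency-plus-openness half is essentially the paper's argument (via \Cref{pr:gmodl2g}, \Cref{le:alm.ell.perm} and the no-fixed-vector criterion), and the transfinite/limit-ordinal worry disappears once one reduces to Lie groups by \Cref{cor:lie.enough}; but without an analogue of \Cref{th:ext.by.rd.perm}, or a proof of the structural reduction above, the necessity of \Cref{item:thn:char.conn.alm.ell.lie:lyrs.act} is not established.
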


An amalgamation of \Cref{thn:char.conn.alm.ell.lie} and \Cref{th:ext.by.rd.perm}, on which the former relies, provides a permanence result for almost-ellipticity. 

\begin{theoremN}\label{thn:perm.vec.sbgp}
  Let $\bG$ be a connected locally compact group and $\bL\trianglelefteq \bG$ a normal closed vector subgroup.

  $\bG$ is (openly) almost elliptic if and only if $\bG/\bL$ is respectively so, along with all (equivalently, any one) subgroup of $\bG$ of the form $\bL\cdot \bK\cong \bL\rtimes \bK$ for maximal compact $\bK\le \bG$. 
\end{theoremN}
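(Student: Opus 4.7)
The plan is to invoke the two preceding results that the statement amalgamates: the structural characterization \Cref{thn:char.conn.alm.ell.lie} and the vector-kernel extension permanence \Cref{th:ext.by.rd.perm}. Throughout, \Cref{thn:char.conn.alm.ell.lie}\Cref{item:thn:char.conn.alm.ell.lie:auto.opn} collapses the ``almost-elliptic'' and ``openly almost-elliptic'' notions for connected locally compact groups, so I focus on the unqualified version.

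For the forward direction, suppose $\bG$ is almost-elliptic. The quotient $\bG/\bL$ inherits almost-ellipticity because quotient maps are continuous open surjections that send elliptic elements to elliptic elements and dense sets to dense sets. For the closed subgroup $\bL\cdot\bK\cong \bL\rtimes\bK$, observe that $\bL$ (abelian, normal, connected and simply connected) is its pro-solvable radical, and $\bK$ is its own maximal compact: any compact subgroup of $\bL\rtimes\bK$ containing $\bK$ meets the vector group $\bL$ trivially and hence equals $\bK$. \Cref{thn:conn.cpct.by.ss.solv} therefore reduces almost-ellipticity of $\bL\cdot\bK$ to non-triviality of all $\bK$-weights on $\fl$, which in turn is exactly the ``kernel-side'' conclusion furnished by \Cref{th:ext.by.rd.perm} when applied to the extension $\{1\}\to \bL\to \bG\to \bG/\bL\to \{1\}$.

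For the reverse direction, suppose $\bG/\bL$ and $\bL\cdot\bK$ are both almost-elliptic. By \Cref{thn:conn.cpct.by.ss.solv}, the latter translates to non-triviality of the $\bK$-weights on $\fl$. Inserting this together with the almost-ellipticity of $\bG/\bL$ into \Cref{th:ext.by.rd.perm} yields almost-ellipticity of $\bG$ itself.

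The main obstacle is not the present assembly but \Cref{th:ext.by.rd.perm} in the first place, whose proof must handle the interplay of the closed derived series of $\bG$ and $\bG/\bL$, compare $\bK$-weights layer by layer, and in particular cope with the fact that $\fl$ need not coincide with any single layer of $\fg^{((\bullet))}_{sol}$. Conditional on that input, \Cref{thn:perm.vec.sbgp} is essentially a transcription, with \Cref{thn:conn.cpct.by.ss.solv} used to translate between ``almost-ellipticity of $\bL\cdot\bK$'' and ``$\bK$-weights on $\fl$ are all non-trivial''.
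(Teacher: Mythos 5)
Your proposal matches the paper's own treatment: the paper offers no separate argument for this theorem, presenting it precisely as an amalgamation of \Cref{thn:char.conn.alm.ell.lie} (whose part \Cref{item:thn:char.conn.alm.ell.lie:auto.opn} collapses almost-ellipticity and open almost-ellipticity for the connected groups involved) with \Cref{th:ext.by.rd.perm}, modulo the routine identification of $\bL\cdot \bK$ with $\pi^{-1}\left(\pi(\bK)\right)$ for $\pi(\bK)\le \bG/\bL$ maximal compact --- which is exactly your route, with the harmless extra detour through \Cref{thn:conn.cpct.by.ss.solv} to rephrase almost-ellipticity of $\bL\rtimes \bK$ as the weight condition. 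One cosmetic slip worth noting: $\bL$ need not be the pro-solvable radical of $\bL\rtimes\bK$ (take $\bK$ with positive-dimensional center, e.g.\ a torus), but nothing in your argument depends on this, since \Cref{thn:conn.cpct.by.ss.solv} only requires $\bL\rtimes\bK$ to be an extension of a compact connected group by a connected, simply-connected solvable Lie group.
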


There is also \Cref{th:sel.cj}, here playing an auxiliary role, but perhaps of some independent interest:

\begin{theoremN}\label{thn:cj.cls}
  For any neighborhood $V\ni 1$ in a connected locally compact group $\bG$ and maximal compact $\bK\le \bG$ there is some open dense subset $U_{\bK}\subseteq \bK$ so that
  \begin{equation*}
    \bG \ni
    t
    \text{ sufficiently close to $U_{\bK}$}
    \xRightarrow{\quad}
    \exists\left(s\in V\right)\left(sts^{-1}\in \bK\right). 
  \end{equation*}
\end{theoremN}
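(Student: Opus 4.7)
The plan is to reduce to the connected Lie-group case via Yamabe's theorem, define $U_\bK\subseteq \bK$ as the set of $\Ad$-regular elements on a $\bK$-invariant complement to $\fk$ in $\fg$, and solve the conjugation equation by the implicit function theorem. Yamabe supplies a compact normal $N\trianglelefteq \bG$ with $N\subseteq V$ and $\bG/N$ a finite-dimensional Lie group; maximality of $\bK$ forces $N\le \bK$ (since $\bK N$ is a compact subgroup containing $\bK$), so $\bK/N$ is maximal compact in $\bG/N$. An open dense $U_{\bK/N}\subseteq \bK/N$ for the Lie statement pulls back to an open dense $U_\bK\subseteq \bK$, and a conjugator $\bar s\in V/N$ lifts (after compensating by an element of $N\subseteq V$) to $s\in V$ with $sts^{-1}\in \bK N=\bK$. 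So assume $\bG$ is Lie. Fix a $\bK$-invariant vector complement $\fp$ of $\fk$ in $\fg$ and set
\[
U_\bK \;:=\; \bigl\{k\in \bK \;:\; \Ad_k - 1 \in \End(\fp) \text{ is invertible}\bigr\}.
\]
Openness is immediate from continuity of the determinant. For density, restrict to a maximal torus $\bT\le \bK$: $\bT\setminus U_\bK=\bigcup_\chi \ker\chi$ over the $\bT$-weights $\chi$ of $\fp_\bC$, a finite union of proper closed subgroups of $\bT$ (under the non-trivial-weight hypothesis pervading the paper), hence nowhere dense. Conjugation-invariance of $U_\bK$ and the Weyl covering $\bK=\bigcup_{g\in \bK}g\bT g^{-1}$ then yield density in $\bK$.

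For the conjugation step, $(k,X)\mapsto k\exp X$ is a local diffeomorphism $\bK\times \fp\to \bG$ along $\bK\times\{0\}$, so elements $t$ close to $k\in U_\bK$ factor uniquely as $t=k'\exp X$ with $k'\in \bK$ near $k$ and $X\in \fp$ small. Trying $s=\exp Y$ with $Y\in \fp$, a Baker--Campbell--Hausdorff expansion produces
\[
sts^{-1} \;=\; k'\exp\!\bigl((\Ad_{(k')^{-1}}-1)Y + X + R(X,Y)\bigr),
\]
with $R$ smooth and vanishing to second order at $(0,0)$. The condition $sts^{-1}\in \bK$ then reduces to the $\fp$-equation $(\Ad_{(k')^{-1}}-1)Y + X + R(X,Y)=0$; for $k'\in U_\bK$ the operator $\Ad_{(k')^{-1}}-1$ is invertible on $\fp$, and the implicit function theorem produces $Y=Y(k',X)$ smoothly with $Y(k',0)=0$, uniformly for $k'$ in a compact subset of $U_\bK$. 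Shrinking the $\bG$-neighborhood of $U_\bK$ over which $(k',X)$ ranges keeps $s=\exp Y$ inside $V$.

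The main obstacle is the density step, which rests essentially on the absence of trivial $\bT$-weights on $\fp$: without it, $\Ad_k-1$ is nowhere invertible on $\fp$ and the natural $U_\bK$ would be empty (compare $\bG=\bK\times\bR$ with trivial action, where the conjugation conclusion also genuinely fails). The theorem thus tacitly lives in the almost-elliptic regime characterized in \Cref{thn:conn.cpct.by.ss.solv}, where the weight-freeness of $\Ad:\bK\circlearrowright \fg/\fk$ is exactly what the surrounding hypotheses supply. Apart from this, only routine care is needed to ensure uniform smallness of $Y=\log s$ over the relevant compact neighborhoods and clean transfer through the Yamabe reduction.
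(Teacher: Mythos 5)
There is a genuine gap, and you have in effect flagged it yourself: your $U_{\bK}$ --- the set of $k\in \bK$ with $\Ad_k-1$ invertible on a $\bK$-invariant complement $\fp$ of $\fk$ in $\fg$ --- is empty whenever the trivial weight occurs in $\Ad:\bK\circlearrowright \fg/\fk$ (already for $\bG=\bK\times\bR$ with trivial action), and your way out, declaring that the theorem ``tacitly lives in the almost-elliptic regime'' of \Cref{thn:conn.cpct.by.ss.solv}, is not available. The theorem is stated, and needed, for an \emph{arbitrary} connected locally compact group: it is the body statement \Cref{th:sel.cj}, whose part \Cref{item:th:sel.cj:prx.el} is the key input in the proof of \Cref{th:ext.by.rd.perm}, which in turn feeds into \Cref{thn:conn.cpct.by.ss.solv} and \Cref{thn:char.conn.alm.ell.lie}; there the trivial-weight-freeness of the $\bK$-action is precisely the \emph{conclusion} being established, so presupposing it is circular. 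What rescues the statement from your own counterexample is a hypothesis you dropped: in \Cref{th:sel.cj} the element $t$ is required to be \emph{elliptic}, and with that proviso the conclusion holds with no weight condition whatsoever (in $\bK\times\bR$, say, elliptic elements near $\bK$ already lie in $\bK$).

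The paper's proof is correspondingly different and uses ellipticity throughout. After the same reduction to Lie groups it passes to the adjoint image so that $\bG$ is a closed subgroup of $\mathrm{GL}(n,\bC)$, takes for the dense open set the \emph{generic} elements $\bK_{gen}$ --- those on which the weights of the $\bK$-action on $\bC^n$ take distinct values, a set that is always open dense, unlike yours --- and shows by a weight-space/Grassmannian continuity argument that an elliptic $t$ close to $\bK_{gen}$ is conjugate, by an element close to $1$, into a maximal torus of $\bK$; equivalently, the compact group $\overline{\Braket{t}}$ lies in a prescribed neighborhood $V\supseteq \bK$, and Montgomery--Zippin's conjugation theorem \cite{MR6545} then moves it into $\bK$ by an element near $1$. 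Your implicit-function-theorem step is sound as far as it goes on the directions of $\fp$ where $\Ad_{k'}-1$ is invertible, but it says nothing about the degenerate directions, and it is exactly there that ellipticity of $t$ --- not weight-freeness of the action --- is what the argument must exploit. As written, your proposal proves a weaker statement than the one the paper requires, and cannot be patched without reinstating the ellipticity hypothesis and an argument of the above kind for the trivial-weight part.
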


The paper concludes with a number of asides and general remarks on maximal compact subgroups, flowing out of the main thread. An example is \Cref{pr:lc.alm.conn.max.cpct.inv}, to the effect that a compact automorphism group of an almost-connected locally compact group $\bG$ leaves some maximal compact subgroup of $\bG$ invariant. 

\subsection*{Acknowledgments}

I am grateful for helpful suggestions from K.-H. Neeb. 


\section{Groups with large elliptic sets}\label{se:gp.lg.ell}

Separation (i.e. being \emph{Hausdorff}, or \emph{$T_2$} \cite[Definition 13.5]{wil_top}) is always assumed for topological groups. The vast majority of the Lie groups of interest below are modeled on finite-dimensional manifolds (as opposed to the \emph{Banach} Lie groups of \cite[\S III.1.1, Definition 1]{bourb_lie_1-3}, \cite[\S VI.5]{lang-fund}, \cite[Definition IV.1]{neeb-inf}, etc.), so finite dimensionality is taken for granted and all departures from it (as far as Lie groups are concerned) will be made explicit.

\begin{remark}\label{re:dense.tors}
  In reference to \cite[Lemma 2]{zbMATH03510594} (recalled in the Introduction), to the effect that torsion subgroups $\Gamma\le \bG$ of connected Lie groups are relatively compact, it is crucial that $\Gamma\le \bG$ be a sub\emph{group} rather than just a sub\emph{set}. It is perfectly possible for a connected, non-compact, Lie group to have dense torsion: \Cref{ex:dns.ell.set}.
\end{remark}

\begin{example}\label{ex:dns.ell.set}
  Consider the semidirect product $\bG:=V\rtimes \bK$ afforded by a representation $\rho:\bK\circlearrowright V$ (which notation refers to a continuous morphism $\bK\xrightarrow{\rho} GL(V)$) of a compact connected Lie group $\bK$ on a f.d. real vector space $V$.

  An element $(v,s)\in V\rtimes \bK$ will be conjugate to one of $\bK$ (and hence elliptic) as soon as $v$ is in the image of $\rho(s)-1\in \End(V)$. It follows that
  \begin{itemize}[wide]
  \item the set of elliptic elements is dense in $\bG$ whenever the trivial weight does not feature in $\rho$.
    
  \item so the same goes for the set of torsion elements: every elliptic element topologically generates a compact Lie group; all such, in turn, are products of tori and finite abelian groups \cite[Corollary 4.2.6]{de}, so have dense torsion. 
  \end{itemize}
  For the more glib assertions we refer to \Cref{th:no.triv.wt}.
\end{example}

\Cref{ex:dns.ell.set} can in fact be turned into a \emph{characterization} result for representations giving rise to semidirect products with ``large'' sets of elliptic elements. 

\begin{theorem}\label{th:no.triv.wt}
  Let
  \begin{equation}\label{eq:k.by.v}
    \{1\}
    \to
    \left(V\cong \bR^d\right)
    \lhook\joinrel\xrightarrow{\quad}
    \bG
    \xrightarrowdbl{\quad}
    \bK
    \to
    \{1\}
  \end{equation}
  be an extension of a compact connected group $\bK$ by a vector group $V\cong \bR^d$ and denote by $\rho:\bK \circlearrowright V$ the underlying $\bK$-representation. The following conditions are equivalent
  
  \begin{enumerate}[(a)]

  \item\label{item:th:no.triv.wt:kfr.dns} The subset
    \begin{equation*}
      \bK_{\rho,fr}
      :=
      \left\{s\in \bK\ :\ \rho(s)-1\in GL(V)\text{ is invertible}\right\}
      \subseteq \bK
    \end{equation*}
    of elements acting freely on $V^{\times}:=V\setminus \{0\}$ is dense.

  \item\label{item:th:no.triv.wt:kfr.clstr} $1\in \bK$ is a \emph{cluster point} \cite[Definition 4.9]{wil_top} of $K_{\rho,fr}$.

    
  \item\label{item:th:no.triv.wt:tfr.dns} The set $\bT_{\rho,fr}\subseteq \bT$ is dense for every (equivalently, any one) maximal pro-torus $\bT\le \bK$.

  \item\label{item:th:no.triv.wt:tfr.clstr} $1\in \bT$ is a cluster point for $\bT_{\rho,fr}$ for every (any) maximal pro-torus $\bT\le \bK$.
    
  \item\label{item:th:no.triv.wt:ntw} The weights of $\rho$ with respect to every (any) maximal pro-torus $\bT\le \bK$ are all non-trivial. 

  \item\label{item:th:no.triv.wt:dns.ell} $\bG$ is almost-elliptic in the sense of \Cref{def:ell}\Cref{item:def:ell:alm.ell.set}.

  \item\label{item:th:no.triv.wt:op.dns.ell} $\bG$ is openly almost-elliptic in the sense of \Cref{def:ell}\Cref{item:def:ell:op.alm.ell.set}.
  \end{enumerate}  
\end{theorem}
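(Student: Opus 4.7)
The plan is to prove the equivalences in two batches: first the purely representation-theoretic conditions (a)--(e), which depend only on $\rho : \bK \circlearrowright V$, and then close the cycle (e) $\Rightarrow$ (g) $\Rightarrow$ (f) $\Rightarrow$ (e).

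For the first batch, I would exploit the fact that since $V$ is finite-dimensional, $\rho$ factors through a compact connected Lie quotient $\bK_0 := \bK/\ker\rho$, and the condition $s \in \bK_{\rho,fr}$ descends: $\bK_{\rho,fr} = \pi^{-1}((\bK_0)_{\rho,fr})$ for the quotient map $\pi$. On the Lie group $\bK_0$, the complement of $(\bK_0)_{\rho,fr}$ is the zero locus of the real-analytic function $s \mapsto \det(\rho(s) - 1)$, hence either all of $\bK_0$ or nowhere dense. Computing this determinant on a maximal torus via the weight decomposition gives $\det(\rho(t) - 1) = \prod_\chi (\chi(t) - 1)^{m_\chi}$, which vanishes identically iff some weight $\chi$ is trivial. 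This handles (c) $\Leftrightarrow$ (e) and (a) $\Leftrightarrow$ (e), using that $\pi$ is open and surjective to lift density from $\bK_0$ to $\bK$ and that every element of a compact connected group is conjugate into a given maximal pro-torus (so $\bK_{\rho,fr}$ is the conjugation-saturation of $\bT_{\rho,fr}$). The cluster-point versions (b) and (d) drop out once one observes that for $X \in \ft$ close to $0$ the eigenvalues of $\rho(\exp X) - 1$ are controlled to first order by the values $\lambda(X)$ of the (real forms of the) weights, so that they are all nonzero on arbitrarily small generic $X$ precisely when no weight is trivial; the passage from (b) to (d) uses a compactness-and-conjugation argument to convert a sequence $s_n \to 1$ in $\bK_{\rho,fr}$ into one in $\bT_{\rho,fr}$.

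For (e) $\Rightarrow$ (g), the direct computation $(w,1)(v,s)(w,1)^{-1} = (v - (\rho(s)-1)w,\, s)$ in the semidirect product (the extension \eqref{eq:k.by.v} necessarily splits, since a maximal compact subgroup of $\bG$ meets $V$ trivially and projects isomorphically onto $\bK$) shows that whenever $s \in \bK_{\rho,fr}$, one can take $w := (\rho(s)-1)^{-1}v$ to conjugate $(v,s)$ into $\bK$; hence every such $(v,s)$ is elliptic. The open subset $U := V \times \bK_{\rho,fr}$ is therefore pointwise-elliptic, and it is dense by the equivalence (a) $\Leftrightarrow$ (e) just established. The implication (g) $\Rightarrow$ (f) is immediate from \Cref{def:ell}.

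The last implication (f) $\Rightarrow$ (e) I would argue by contraposition: if some weight is trivial, then the $\bK$-fixed subspace $V^{\bK}$ is nonzero, and complete reducibility of compact group representations provides a $\bK$-invariant complement $V'$, which is normal in $\bG$ (being normalized by $V$ and stabilized setwise by $\bK$). The quotient $\bG/V' \cong V^{\bK} \times \bK$ is a direct product (trivial action), and in such a group the cyclic subgroup generated by $(v,k)$ projects onto the unbounded $\bZ v \subseteq V^{\bK}$ unless $v = 0$, so the elliptic locus is exactly $\{0\} \times \bK$, which is nowhere dense. Since continuous open quotient maps send pointwise-elliptic dense subsets to pointwise-elliptic dense subsets, $\bG$ itself cannot be almost-elliptic. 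The main obstacle I anticipate is the careful reduction to the Lie case via $\bK_0$ for the weight-theoretic conditions, and verifying that density and cluster-point properties track correctly under $\pi:\bK \to \bK_0$ (the key point being that $\ker\pi$ is disjoint from $\bK_{\rho,fr}$, so projections of cluster sequences cannot collapse to $1$); once this bookkeeping is in place the rest reduces to standard manipulations inside the semidirect product.
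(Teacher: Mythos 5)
Your handling of the purely representation-theoretic equivalences (a)--(e) (via analyticity of $s\mapsto\det(\rho(s)-1)$ on a Lie quotient and conjugation into a maximal pro-torus) and of the chain \Cref{item:th:no.triv.wt:ntw} $\Rightarrow$ \Cref{item:th:no.triv.wt:op.dns.ell} $\Rightarrow$ \Cref{item:th:no.triv.wt:dns.ell} (explicit conjugation $(w,1)(v,s)(w,1)^{-1}=(v-(\rho(s)-1)w,s)$) is essentially sound and parallels the paper, though your justification of the splitting of \Cref{eq:k.by.v} quietly uses the nontrivial fact that a maximal compact subgroup surjects onto $\bK$ (better to cite the splitting theorem, as the paper does). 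The genuine problem is the implication \Cref{item:th:no.triv.wt:dns.ell} $\Rightarrow$ \Cref{item:th:no.triv.wt:ntw}: your contrapositive opens with ``if some weight is trivial, then $V^{\bK}\neq 0$,'' and this is false. A trivial weight means the trivial character of a maximal pro-torus $\bT$ occurs in $\rho|_{\bT}$, i.e.\ $V^{\bT}\neq 0$, which does not force $\bK$-fixed vectors: for $\bK=SO(3)$ acting on $V=\bR^3$ by rotations, the restriction to a maximal torus $SO(2)$ is (plane)\,$\oplus$\,(fixed axis), so the trivial weight occurs, yet $V^{SO(3)}=0$. In exactly such cases your construction collapses -- the invariant complement $V'$ is all of $V$, the quotient $\bG/V'\cong\bK$ is compact, and no contradiction is produced -- although these are precisely the groups the implication must exclude: in $\bR^3\rtimes SO(3)$ the elliptic elements are the $(v,s)$ with $v\in\im(\rho(s)-1)$, and since $\im(\rho(s)-1)$ is always a proper subspace this set is not dense. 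So as written your argument only disposes of the special case $V^{\bK}\neq 0$ and misses the hard case.

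What is needed to close this direction is a local obstruction attached to individual elements rather than a global quotient. The paper argues as follows: a trivial $\bT$-weight forces every $s\in\bK=\bigcup_k k\bT k^{-1}$ to have $1$ as an eigenvalue on $V$; choose $s$ whose $1$-eigenspace has minimal dimension, and use lower semicontinuity of rank to see that over a small neighborhood $U\ni s$ the kernels and images of $1-\rho(s')$ form continuous (trivial) subbundles of $U\times V$. Consequently, for $0\neq v\in\ker(1-\rho(s))$ the element $(v,s)$ cannot be approximated by elements $(v',s')$ with $v'\in\im(\rho(s')-1)$, i.e.\ by elliptic elements, so $\bG$ is not almost-elliptic. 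Your proof needs this (or an equivalent) argument in place of the passage through $V^{\bK}$ and $\bG/V'$.
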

\begin{proof}
  The statement's groups $\bG$ are precisely the semidirect products $\bG\cong V\rtimes_{\rho}\bK$, since \Cref{eq:k.by.v} always splits (\cite[Th\'eor\`eme 1]{SSL_1954-1955__1__A24_0}, \cite[Exercise 19 for \S III.9]{bourb_lie_1-3}, etc.)
  
  Observe that all conditions hold vacuously when $V=\{0\}$: everything in sight is elliptic and acts freely on the empty set $V^{\times}$, $\rho$ has no weights at all, etc. We can thus safely henceforth ignore that trivial case and assume $V$ non-zero. Nor is there any harm in assuming $\bK$ Lie whenever convenient, substituting for it the Lie \cite[Theorem I.3.11]{btd_lie_1995} image of $\bK\xrightarrow{\rho}GL(V)$.

  We relegate the purely representation-theoretic mutual equivalence of \Cref{item:th:no.triv.wt:kfr.dns}-\Cref{item:th:no.triv.wt:ntw} to \Cref{pr:rep.content}. To relate those conditions to \Cref{item:th:no.triv.wt:dns.ell} observe first that the subgroup $\bK\le \bG:=V\rtimes_{\rho} \bK$ is maximal compact and conjugate to all such by \cite[\S VII.3, Proposition 3]{bourb_int_7-9_en} (connectedness is irrelevant here), so $(v,s)\in \bG$ is elliptic precisely when conjugate to an element of $\bK\le \bG$. A straightforward computation shows that this translates precisely to
  \begin{equation}\label{eq:v.in.im}
    v\in \im (\rho(s)-1)\le V;
  \end{equation}
  condition \Cref{item:th:no.triv.wt:dns.ell}, then, translates to
  \begin{equation}\label{eq:dns.in.vk}
    \left\{(v,s)\in V\rtimes \bK\ :\ \text{\Cref{eq:v.in.im} holds}\right\}
    \quad
    \overset{\text{dense}}{\subseteq}
    \quad
    V\rtimes \bK.
  \end{equation}
  \begin{enumerate}[label={},wide]
  \item \textbf{\Cref{item:th:no.triv.wt:ntw} $\xRightarrow{\quad}$ \Cref{item:th:no.triv.wt:dns.ell}.} Simply observe that
    \begin{equation*}
      \left\{s\in \bK\ :\ \rho(s)-1\text{ invertible}\right\}
      \subseteq
      \bK
    \end{equation*}
    is dense. For such an $s\in \bK$ \emph{all} $v\in V$ will satisfy \Cref{eq:v.in.im}, hence the conclusion. 
    
    
  \item \textbf{\Cref{item:th:no.triv.wt:dns.ell} $\xRightarrow{\quad}$ \Cref{item:th:no.triv.wt:ntw}.} We will argue for the contrapositive. The assumption that $\rho$ admits the trivial character of $\bT$ as a weight amounts to saying that all elements of $\bT$ (hence also of $\bK=\bigcup_s s\bT s^{-1}$ \cite[Theorem 9.32(ii)]{hm5}) have 1 as an eigenvalue on $V$.

    Consider an element $s\in \bK$ whose 1-eigenspace on $V$ has minimal dimension, yielding an orthogonal decomposition
    \begin{equation}\label{eq:v.decomp}
      V=\ker\left(1-\rho(s)\right)\oplus \im\left(1-\rho(s)\right)
    \end{equation}
    with respect to a $\bK$-invariant inner product on $V$ fixed beforehand. As noted in \cite[Remark post Proposition 13.4]{salt_divalg}, rank is \emph{lower semicontinuous} \cite[Problem 7K]{wil_top} in families of operators. The assumed maximality of the second-summand dimension in \Cref{eq:v.decomp} then implies (by a version of \cite[Proposition 13.4]{salt_divalg}, say) that the trivial $V$-fibered \emph{bundle} \cite[Definition 4.7.2]{hus_fib} over some small neighborhood $\bK\supseteq U\ni s$ decomposes as a direct sum (or \emph{Whitney sum} \cite[Definition 2.4.2]{hus_fib}) 
    \begin{equation}\label{eq:bdl.split.u}
      U\times V\cong E_{\ker}\oplus E_{\im}
      ,\quad
      \text{fibers}
      \left\{
        \begin{aligned}
          E_{\ker,s'}&=\ker\left(1-\rho(s')\right)\\         
          E_{\im,s'}&=\im\left(1-\rho(s')\right)
        \end{aligned}
      \right.
      ,\quad
      s'\in U
    \end{equation}
    of again trivial bundles. For $s'\in U$ the element
    \begin{equation*}
      \im\left(1-\rho\left(s'\right)\right)
      \in
      \text{\emph{Grassmannian} \cite[\S 15.4]{fh_rep-th} of }V
    \end{equation*}    
    will thus be close to $\im\left(1-\rho\left(s\right)\right)$. Consequently, no element
    \begin{equation*}
      (v,s)\in V\rtimes \bK
      ,\quad
      0\ne v\in \ker\left(1-\rho(s)\right)
    \end{equation*}
    can lie in the closure of the set of elliptic elements.

  \item \textbf{\Cref{item:th:no.triv.wt:op.dns.ell} $\xRightarrow{\quad}$ \Cref{item:th:no.triv.wt:dns.ell}} is obvious.  
    
  \item \textbf{\Cref{item:th:no.triv.wt:ntw} $\xRightarrow{\quad}$ \Cref{item:th:no.triv.wt:op.dns.ell}.} If $\rho$ carries only non-trivial weights then $\bK_{\rho,fr}\subseteq \bK$ is in fact \emph{open} dense, and for $s\in \bK_{\rho,fr}$ all $(v,s)\in V\rtimes \bK$ are elliptic.
  \end{enumerate}
\end{proof}

\begin{proposition}\label{pr:rep.content}
  Conditions \Cref{item:th:no.triv.wt:kfr.dns}-\Cref{item:th:no.triv.wt:ntw} of \Cref{th:no.triv.wt} are mutually equivalent for any finite-dimensional representation $\rho:\bK\circlearrowright V$ of a compact connected group $\bK$. 
\end{proposition}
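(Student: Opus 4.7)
The plan is to close the cycle $(\mathrm{e}) \Rightarrow (\mathrm{c}) \Rightarrow (\mathrm{a}) \Rightarrow (\mathrm{b}) \Rightarrow (\mathrm{e})$, with the remaining implications $(\mathrm{c}) \Rightarrow (\mathrm{d}) \Rightarrow (\mathrm{b})$ following at no cost from the inclusions $\bT_{\rho, fr} \subseteq \bK_{\rho, fr}$ and $\bT \hookrightarrow \bK$. A preliminary observation: since all maximal pro-tori of the compact connected $\bK$ are $\bK$-conjugate (part of \cite[Theorem 9.32]{hm5}), passage from $\bT$ to $k\bT k^{-1}$ transports both the weight set of $\rho|_{\bT}$ and the set $\bT_{\rho, fr}$ via the inner automorphism by $k$, so the ``any versus every'' clauses in (c), (d), (e) are equivalent at once.

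For $(\mathrm{e}) \Rightarrow (\mathrm{c})$ I decompose $V_{\bC}$ into its finitely many $\bT$-weight spaces, whereupon $\rho(t) - 1$ is invertible exactly when $\chi(t) \ne 1$ for every weight $\chi$ appearing --- equivalently, exactly when $t$ avoids the finite union $\bigcup_\chi \ker \chi$. Under (e) each $\chi$ is non-trivial, so $\ker \chi$ is a proper closed subgroup of the connected $\bT$ and hence has empty interior; the finite union is then nowhere dense and $\bT_{\rho, fr}$ is open dense in $\bT$. For $(\mathrm{c}) \Rightarrow (\mathrm{a})$ I use that $\bK_{\rho, fr}$ is conjugation-stable (the characteristic polynomial of $\rho(s) - 1$ only depends on the conjugacy class of $s$) together with $\bK = \bigcup_k k\bT k^{-1}$ from \cite[Theorem 9.32(ii)]{hm5}: given $g = ktk^{-1} \in \bK$ and a net $t_\alpha \in \bT_{\rho, fr}$ with $t_\alpha \to t$, the conjugates $kt_\alpha k^{-1}$ lie in $\bK_{\rho, fr}$ and converge to $g$. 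Finally $(\mathrm{a}) \Rightarrow (\mathrm{b})$ is automatic: $\rho(1) - 1 = 0$ is not invertible (the $V = \{0\}$ case being vacuous throughout), so $1 \notin \bK_{\rho, fr}$ and density in $\bK$ forces $1$ to be a cluster point.

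For the return leg $(\mathrm{b}) \Rightarrow (\mathrm{e})$ I argue the contrapositive. If the trivial character appears as a $\bT$-weight of $\rho$, the invariant subspace $V^{\bT} \ne \{0\}$ lies in the kernel of $\rho(t) - 1$ for every $t \in \bT$, making that operator non-invertible; by conjugation-invariance and $\bK = \bigcup_k k\bT k^{-1}$ the same conclusion extends to every $s \in \bK$. Hence $\bK_{\rho, fr} = \emptyset$, and $1$ cannot be a cluster point of it.

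There is no substantial technical obstacle: the only structural input is the standard conjugacy-and-cover package for maximal pro-tori in compact connected groups (already invoked in the proof of \Cref{th:no.triv.wt}), and everything else reduces to elementary linear algebra via the weight-space decomposition plus the topological truism that a proper closed subgroup of a connected group has empty interior.
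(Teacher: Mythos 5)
Your proof is correct, but it routes the equivalences differently from the paper, and the difference is worth noting. The shared ingredients are the same: conjugacy of maximal pro-tori, the covering $\bK=\bigcup_k k\bT k^{-1}$ from \cite[Theorem 9.32(ii)]{hm5}, and the observation that the non-vanishing loci of the finitely many non-trivial weights are open dense in $\bT$ (which is exactly how the paper, too, gets from \Cref{item:th:no.triv.wt:ntw} to the density statements). Where you diverge is in how the cluster-point conditions are tied back in. The paper closes the loop with the implication \Cref{item:th:no.triv.wt:tfr.clstr} $\Rightarrow$ \Cref{item:th:no.triv.wt:tfr.dns}, proved by a multiplicative translation trick resting on the spectral perturbation fact \Cref{eq:sml.ne1.eigs} (a small element of $\bT$ commuting with a given one and with no eigenvalue $1$ moves it into $\bT_{\rho,fr}$), together with a conjugation argument for \Cref{item:th:no.triv.wt:kfr.clstr} $\Rightarrow$ \Cref{item:th:no.triv.wt:tfr.clstr}. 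You bypass both steps by the sharper observation that if the trivial weight occurs then $\rho(s)-1$ is singular for \emph{every} $s\in\bK$, i.e.\ $\bK_{\rho,fr}=\emptyset$, so already the weakest condition \Cref{item:th:no.triv.wt:kfr.clstr} forces \Cref{item:th:no.triv.wt:ntw}; the cycle \Cref{item:th:no.triv.wt:ntw} $\Rightarrow$ \Cref{item:th:no.triv.wt:tfr.dns} $\Rightarrow$ \Cref{item:th:no.triv.wt:kfr.dns} $\Rightarrow$ \Cref{item:th:no.triv.wt:kfr.clstr} $\Rightarrow$ \Cref{item:th:no.triv.wt:ntw}, with \Cref{item:th:no.triv.wt:tfr.dns} $\Rightarrow$ \Cref{item:th:no.triv.wt:tfr.clstr} $\Rightarrow$ \Cref{item:th:no.triv.wt:kfr.clstr} for free, then settles everything. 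Your version is more economical (no operator-norm estimate needed), at the mild cost of making all the cluster-point conditions pass through the weight condition rather than relating the density and cluster-point statements to each other directly, as the paper's perturbation argument does. The only cosmetic caveat is the degenerate case ($V=\{0\}$ or trivial $\bK$), which you flag as vacuous exactly as the paper does, so nothing is missing.
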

\begin{proof}
  Maximal pro-tori are conjugate in $\bK$ \cite[Theorem 9.32(i)]{hm5}, so any of the statements above valid for one will indeed be valid for all; the split-quantifier sentences can thus indeed be treated as single units. 

  The mutual equivalence of conditions \Cref{item:th:no.triv.wt:kfr.dns,item:th:no.triv.wt:kfr.clstr,item:th:no.triv.wt:tfr.dns,item:th:no.triv.wt:tfr.clstr} is summarized in
  \begin{equation*}
    \begin{tikzpicture}[>=stealth,auto,baseline=(current  bounding  box.center)]
      \path[anchor=base] 
      (0,0) node (l) {\Cref{item:th:no.triv.wt:kfr.dns}}
      +(4,1) node (u) {\Cref{item:th:no.triv.wt:kfr.clstr}}
      +(4,-1) node (d) {\Cref{item:th:no.triv.wt:tfr.dns}}
      +(8,0) node (r) {\Cref{item:th:no.triv.wt:tfr.clstr}}
      ;
      \draw[-implies,double equal sign distance] (l) to[bend left=6] node[pos=.5,auto] {$\scriptstyle \text{obvious}$} (u);
      \draw[-implies,double equal sign distance] (u) to[bend left=6] node[pos=.5,auto] {$\scriptstyle \bK=\bigcup_s s\bT s^{-1}$ \cite[Theorem 9.32(ii)]{hm5}} (r);
      \draw[-implies,double equal sign distance] (r) to[bend left=6] node[pos=.5,auto] {$\scriptstyle \text{obvious}$} (u);
      \draw[implies-,double equal sign distance] (l) to[bend right=6] node[pos=.5,auto,swap] {$\scriptstyle  \bK=\bigcup_s s\bT s^{-1}$ \cite[Theorem 9.32(ii)]{hm5}} (d);
      \draw[-implies,double equal sign distance] (d) to[bend right=6] node[pos=.5,auto] {$\scriptstyle \text{obvious}$} (r);
      \draw[-implies,double equal sign distance] (r) to[bend left=60] node[pos=.5,auto,swap] {$\scriptstyle \text{\Cref{eq:sml.ne1.eigs}}$} (d);
    \end{tikzpicture}
  \end{equation*}
  given
  \begin{equation}\label{eq:sml.ne1.eigs}
    \forall\left(T\in U(n)\right)
    \exists\left(\varepsilon>0\right)
    \quad:\quad
    \left.
      \begin{aligned}
        \|S&-1\|<\varepsilon\\
        ST&=TS\\
        S-1&\text{ invertible}
      \end{aligned}
    \right\}
    \xRightarrow{\quad}
    ST-1\text{ invertible}.
  \end{equation}
  The equivalence \Cref{item:th:no.triv.wt:tfr.clstr} $\iff$ \Cref{item:th:no.triv.wt:ntw} is also self-evident: the elements of $\bT$ where any one of the finitely many non-trivial $\bT$-weights of $\rho$ is non-trivial constitute an open dense subset, so their intersection is dense as well.
\end{proof}

Dropping the connectedness assumption on $\bK$ in \Cref{th:no.triv.wt} will render the implication \Cref{item:th:no.triv.wt:ntw} $\Rightarrow$ \Cref{item:th:no.triv.wt:dns.ell} generally invalid.

\begin{example}\label{ex:z2.act.inv}
  Take for $\bK$ a semidirect product $\bT\rtimes \left(\bZ/2\right)$ for a torus $\bT$ acted upon by $\bZ/2$ by inversion: the generator $\sigma\in \bZ/2$ interchanges $z^{\pm 1}\in \bT$.

  One can easily arrange for $\rho|_{\bT}$ to have no trivial summands (so that \Cref{item:th:no.triv.wt:ntw} holds). $\rho(\sigma)\in GL(V)$ is a non-scalar reflection, and $\rho\left(\bT\sigma\right)$ consists entirely of conjugates thereof. It follows that no
  \begin{equation*}
    (v,\sigma)
    ,\quad
    0\ne v\in V
    ,\quad
    \rho(\sigma)v=v
  \end{equation*}
  can be approximated arbitrarily by elements $(v,s)$ satisfying \Cref{eq:v.in.im}.
\end{example}

\begin{remark}\label{re:g0.not.enough}
  \Cref{ex:z2.act.inv} also shows that the property of having a dense set of elliptic elements, for a Lie group with finitely many components, cannot be checked only on the identity component. Or again: said property is not invariant under finite extensions.
\end{remark}

In part, the proof of \Cref{th:no.triv.wt} does go through for disconnected $\bK$; we isolate the statement for future use. 

\begin{lemma}\label{le:alm.surj.cbd}
  Let \Cref{eq:k.by.l} be an extension of a compact group $\bK$ by solvable Lie, connected, simply-connected $\bL$.

  \begin{enumerate}[(1)]
  \item\label{item:le:alm.surj.cbd:set} For any maximal compact $\bM\le \bG$, the elliptic elements in $\bG$ are 
    \begin{equation}\label{eq:vxx}
      \left\{(v,s)\in \bL\rtimes \bM\cong \bL\cdot \bM\subseteq \bG\ :\ \exists \left(x\in \bL\right)\left(x^{-1}\cdot \Ad_s x=v\right)\right\}.
    \end{equation}

  \item\label{item:le:alm.surj.cbd:dns} In particular, $\bG$ is almost-elliptic if and only if for any (equivalently, all) maximal compact $\bM\le \bG$ the set \Cref{eq:vxx} is dense in $\bG$.
  \end{enumerate}
\end{lemma}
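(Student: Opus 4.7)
The plan is to derive part (2) as an immediate consequence of part (1) together with \Cref{def:ell}\Cref{item:def:ell:alm.ell.set}, so the real content lies in verifying that the elliptic elements of $\bG$ coincide with the set \Cref{eq:vxx}. Before computing anything I would pin down the structural identification $\bG \cong \bL \rtimes \bM$ for a fixed maximal compact $\bM\le \bG$: since $\bL$ is connected, simply-connected and solvable, it contains no non-trivial compact subgroup, so $\bL\cap \bM=\{1\}$; and since the extension \Cref{eq:k.by.l} splits (by the same Bourbaki/Hochschild-Mostow theorem invoked in the proof of \Cref{th:no.triv.wt}), every maximal compact surjects onto $\bK$ via $\bG\twoheadrightarrow \bK$. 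Hence $\bG=\bL\cdot \bM$ with the product identification $\bG\cong \bL\rtimes \bM$.

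With that identification, the standard observation is that an element of $\bG$ is elliptic if and only if it is $\bG$-conjugate to an element of $\bM$: any element generating a relatively compact subgroup is contained in some maximal compact, and all maximal compacts of $\bG$ are conjugate (same reference). So the problem reduces to asking, for $(v,s)\in \bL\rtimes \bM$, when there exists $(x,t)\in \bL\rtimes \bM$ with $(x,t)(v,s)(x,t)^{-1}\in \bM$.

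The crux is the direct semidirect-product computation
\[
(x,t)(v,s)(x,t)^{-1}=\bigl(x\cdot \Ad_t(v)\cdot \Ad_{tst^{-1}}(x^{-1}),\ tst^{-1}\bigr),
\]
whose $\bL$-component must vanish, i.e.\ $\Ad_t(v)=x^{-1}\,\Ad_{tst^{-1}}(x)$. Setting $y:=\Ad_{t^{-1}}(x)\in \bL$ and applying $\Ad_{t^{-1}}$ on both sides rewrites this as $v=y^{-1}\,\Ad_s(y)$, which is exactly the condition defining the set \Cref{eq:vxx}. Conversely, given any $y\in \bL$ with $v=y^{-1}\,\Ad_s(y)$, plugging in $(x,t)=(y,1)$ verifies that conjugation by $(y,1)$ sends $(v,s)$ to $(1,s)\in \bM$. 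This establishes part \Cref{item:le:alm.surj.cbd:set}, and part \Cref{item:le:alm.surj.cbd:dns} is then just the unwinding of the definition of almost-ellipticity, noting that the description of the elliptic set depends only on the conjugacy class of $\bM$ and that all maximal compacts are conjugate.

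The only genuine subtlety is the initial reduction $\bG\cong \bL\rtimes \bM$; once that is in hand, the identification of elliptic elements is a one-line manipulation of the semidirect-product multiplication. I therefore expect no serious obstacle, just bookkeeping around the splitting and the maximal-compact conjugacy.
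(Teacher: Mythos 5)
Your proposal is correct and follows essentially the same route as the paper: split the extension, identify the maximal compacts as the conjugates of the semidirect factor, note that ellipticity means being conjugate into a maximal compact, and reduce via the semidirect-product conjugation formula to the twisted-conjugation condition $v=y^{-1}\Ad_s y$. The only difference is that you write out explicitly the conjugation computation the paper dismisses as ``straightforward'' (via its reference back to the argument for condition \Cref{item:th:no.triv.wt:dns.ell} of \Cref{th:no.triv.wt}), which is fine.
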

\begin{proof}
  The extension \Cref{eq:k.by.l} splits \cite[Exercise 19 for \S III.9]{bourb_lie_1-3} as $\bG\cong \bL\rtimes \bK$, so the maximal compact subgroups of $\bG$ are the conjugates \cite[Theorems A, B and C]{zbMATH00540631} of any such semidirect factor $\bK$. The conclusion thus follows by the argument casting condition \Cref{item:th:no.triv.wt:dns.ell} of \Cref{th:no.triv.wt} as \Cref{eq:dns.in.vk}. 
\end{proof}

\Cref{re:g0.not.enough} notwithstanding, there are some simple permanence results for almost-ellipticity which we record for whatever use they possess; we omit the proofs and henceforth take them for granted.

\begin{lemma}\label{le:alm.ell.perm}
  \begin{enumerate}[(1)]
  \item\label{item:le:alm.ell.perm:quot.opn} For any topological group $\bG$, quotients and open subgroups are (openly) almost-elliptic as soon as $\bG$ is respectively so.

  \item\label{item:le:alm.ell.perm:ell.in.quot} An element of $\bG$ is elliptic if and only if its image in any (some) quotient $\bG/\bK$ by a compact normal subgroup is elliptic. 
    
  \item\label{item:le:alm.ell.perm:quot.by.cpct} In particular, if $\bK\trianglelefteq \bG$ is compact and normal then $\bG$ is (openly) almost-elliptic if and only if $\bG/\bK$ is respectively so.  \qedhere
  \end{enumerate}
\end{lemma}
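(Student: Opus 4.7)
The three claims all follow by routine reductions to standard facts about topological groups, and I would handle them in the stated order.

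For \Cref{item:le:alm.ell.perm:quot.opn}, I would first note that any continuous homomorphism $\pi$ sends elliptic elements to elliptic ones, since $\overline{\langle\pi(g)\rangle}\subseteq \pi(\overline{\langle g\rangle})$ and the right-hand side is compact whenever $g$ is elliptic. A quotient morphism $\pi:\bG\twoheadrightarrow\bG/\bN$ is moreover an open surjection, hence it sends dense (resp. open) subsets to dense (resp. open) subsets, which together with the previous observation settles the quotient case. For an open subgroup $\bU\le\bG$, the decisive point is that $\bU$ is simultaneously closed; thus the closure of $\langle u\rangle$ for $u\in\bU$ taken in $\bG$ already lies in $\bU$, and ellipticity of $u$ is independent of whether assessed in $\bG$ or in $\bU$. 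Intersecting a dense (resp. open) pointwise-elliptic subset $S\subseteq\bG$ with $\bU$ then produces $S\cap\bU$, dense (resp. open) in $\bU$ and still pointwise-elliptic.

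For \Cref{item:le:alm.ell.perm:ell.in.quot}, the forward direction is the continuous-homomorphism principle just noted. For the converse, the crucial input I would invoke is that the quotient map $\pi:\bG\twoheadrightarrow\bG/\bK$ by a compact normal $\bK$ is a \emph{proper} map — the $\bK$-translation action on $\bG$ being proper, preimages of compact sets are compact. If $\pi(g)$ has compact closure $A:=\overline{\langle\pi(g)\rangle}\subseteq\bG/\bK$, then $\pi^{-1}(A)$ is a compact subset of $\bG$ containing $\langle g\rangle$, so $g$ is elliptic. The parenthetical ``any (some)'' phrasing is legitimate since both directions apply for arbitrary compact normal $\bK$.

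Finally, \Cref{item:le:alm.ell.perm:quot.by.cpct} will follow by combining the two preceding items. The quotient direction is a special case of \Cref{item:le:alm.ell.perm:quot.opn}; conversely, for a dense (resp. open) pointwise-elliptic $S\subseteq\bG/\bK$ the preimage $\pi^{-1}(S)\subseteq\bG$ is open whenever $S$ is (by continuity), dense (since $\pi$ is an open surjection, for which preimages of dense sets are dense), and pointwise-elliptic by \Cref{item:le:alm.ell.perm:ell.in.quot}. The main obstacle — though still essentially routine — is the properness of $\pi$ invoked in \Cref{item:le:alm.ell.perm:ell.in.quot}; everything else amounts to topological bookkeeping.
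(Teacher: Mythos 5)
Your proposal is correct, and there is nothing in the paper to diverge from: the author explicitly omits the proof of this lemma as routine, and your argument is the standard one it implicitly relies on. The only substantive input is the one you identify — that the quotient map by a compact normal subgroup is proper (closed with compact fibers), so preimages of compact sets are compact — and the rest (images of elliptic elements are elliptic, open quotient maps preserve density and openness in both directions, open subgroups are closed so ellipticity is intrinsic) is exactly the topological bookkeeping you describe.
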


We record the following immediate consequence, reducing almost-ellipticity for sufficiently well-behaved locally compact groups to the Lie-group setup. We remind the reader that a topological group $\bG$ is \emph{almost-connected} (e.g. \cite[Preface, p.xvii, Definition 34]{hm5}) is the quotient $\bG/\bG_0$ by its identity connected component is compact.

\begin{corollary}\label{cor:lie.enough}
  An almost-connected locally compact group is (openly) almost elliptic if and only if any one of the following mutually-equivalent conditions holds respectively. 
  \begin{itemize}[wide]
  \item It is a \emph{cofiltered limit} \spr{04AY} of (openly) almost-elliptic Lie groups.

  \item Every compact-kernel Lie quotient of $\bG$ is (openly) almost-elliptic. 

  \item At least one compact-kernel Lie quotient of $\bG$ is (openly) almost-elliptic. 
  \end{itemize}
\end{corollary}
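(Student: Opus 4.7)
The plan is to combine \Cref{le:alm.ell.perm} with the Gleason--Yamabe / Hofmann--Morris structure theorem for almost-connected locally compact groups (see e.g.\ \cite[Chapter 3 and Theorem 4.28]{hm_pro-lie-bk}): any such $\bG$ admits a filter base of compact normal subgroups $\bN\trianglelefteq\bG$ with $\bG/\bN$ a Lie group, canonically realizing $\bG\cong\varprojlim_{\bN}\bG/\bN$ as a cofiltered limit of compact-kernel Lie quotients. In particular, at least one such $\bN$ always exists, so the ``at least one'' bullet is never vacuously satisfied.

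With this in hand, the equivalence of the second and third bullets with the almost-ellipticity of $\bG$ falls out of a direct sandwich against \Cref{le:alm.ell.perm}: if $\bG$ is (openly) almost-elliptic then so is every quotient, hence in particular every compact-kernel Lie quotient, by \Cref{le:alm.ell.perm}\Cref{item:le:alm.ell.perm:quot.opn}; ``every'' trivially gives ``at least one'' (non-vacuously, by the previous paragraph); and ``at least one'' returns to $\bG$ itself via \Cref{le:alm.ell.perm}\Cref{item:le:alm.ell.perm:quot.by.cpct}, applied to the ambient compact normal kernel.

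The cofiltered-limit bullet slots into the same cycle. The forward direction is immediate: if $\bG$ is (openly) almost-elliptic, the canonical Gleason--Yamabe system $\{\bG/\bN\}$ is itself a cofiltered system of (openly) almost-elliptic Lie groups with limit $\bG$. For the converse, given any presentation $\bG\cong\varprojlim_{i}\bG_i$ with $\bG_i$ (openly) almost-elliptic Lie, I would fix some compact normal $\bN\trianglelefteq\bG$ with $\bG/\bN$ Lie and then invoke the standard pro-Lie fact that a continuous homomorphism from a cofiltered limit of topological groups to a Lie target factors through some term of the system. This forces $\bG/\bN$ to be a Lie quotient of some $\bG_i$, hence (openly) almost-elliptic by \Cref{le:alm.ell.perm}\Cref{item:le:alm.ell.perm:quot.opn}, whence $\bG$ itself is so by \Cref{le:alm.ell.perm}\Cref{item:le:alm.ell.perm:quot.by.cpct}.

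The main (still modest) obstacle is the factoring principle in the last step, which requires a careful citation from the pro-Lie literature but is entirely routine; everything else is essentially bookkeeping around \Cref{le:alm.ell.perm} and Gleason--Yamabe, and the ``openly'' variant is handled verbatim since \Cref{le:alm.ell.perm} is stated in both flavors.
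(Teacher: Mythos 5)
Your argument is essentially the paper's own proof, which consists of citing \Cref{le:alm.ell.perm}\Cref{item:le:alm.ell.perm:quot.by.cpct} together with the fact (Montgomery--Zippin, Hofmann--Morris) that an almost-connected locally compact group is the cofiltered limit of its Lie quotients with compact kernels; your sandwich between the three bullets is exactly that reduction. The one step you add beyond the paper --- factoring a compact-kernel Lie quotient $\bG\rightarrowdbl \bG/\bN$ through a term of an arbitrary cofiltered presentation via the no-small-subgroups property of Lie groups --- is fine in spirit, but note that it only exhibits $\bG/\bN$ as a continuous image of the (possibly proper, possibly non-closed) image $p_i(\bG)\le \bG_i$, so to invoke \Cref{le:alm.ell.perm}\Cref{item:le:alm.ell.perm:quot.opn} one should take the limit projections surjective, as they are in the canonical quotient systems the statement has in mind.
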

\begin{proof}
  This follows from \Cref{le:alm.ell.perm}\Cref{item:le:alm.ell.perm:quot.by.cpct}, given that almost-connected locally compact groups are the cofiltered limits of their Lie quotients with compact kernels (\cite[\S 4.6, Theorem]{mz} and \cite[Theorem 3.39]{hm_pro-lie-bk}, say). 
\end{proof}

Another sufficient condition for almost-ellipticity inheritance is as follows.

\begin{proposition}\label{pr:gmodl2g}
  Let $\bG$ be an almost-connected locally compact group and $\bL\trianglelefteq \bG$ normal closed.

  $\bG$ is almost elliptic provided $\bG/\bL$ is, along with all (equivalently, any one) subgroup of $\bG$ of the form
    \begin{equation}\label{eq:preim.max.cpct}
      \pi^{-1}\left(\bK\right)
      ,\quad
      \bG\xrightarrowdbl{\quad\pi\quad}\bG/\bL
      ,\quad
      \bK\le \bG/\bL\text{ maximal compact}.
    \end{equation}
\end{proposition}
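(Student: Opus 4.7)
The plan is a two-layer density argument: given $g\in \bG$ and an open neighborhood $V\ni g$, I need to produce an element of $V$ that is elliptic in $\bG$. The natural approach is to approximate $\pi(g)$ by an elliptic element of $\bG/\bL$ using the first hypothesis, to locate a point of $V$ inside a conjugate of $\bH:=\pi^{-1}(\bK)$ via the structure of compact subgroups in $\bG/\bL$, and then to approximate further inside that conjugate using the second hypothesis.

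Before the main step, I would dispose of the ``all equivalently any one'' clause: the quotient $\bG/\bL$ is locally compact and, being a quotient of almost-connected $\bG$, again almost-connected (the composite $\bG\to \bG/\bL\to (\bG/\bL)/(\bG/\bL)_0$ kills $\bG_0$ and so factors through compact $\bG/\bG_0$), so its maximal compact subgroups form a single conjugacy class. Writing $\bK' = \bar{s}\bK\bar{s}^{-1}$ and lifting $\bar{s}$ to $\tilde{s}\in \bG$ then gives $\pi^{-1}(\bK')=\tilde{s}\bH\tilde{s}^{-1}$, topologically isomorphic to $\bH$ via inner conjugation, hence almost-elliptic iff $\bH$ is. For the main implication, openness of $\pi$ makes $\pi(V)$ an open neighborhood of $\pi(g)$ in $\bG/\bL$, and density of the elliptic set of $\bG/\bL$ produces some $h\in V$ with $\pi(h)$ elliptic. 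Then $\overline{\langle \pi(h)\rangle}$ is a compact subgroup of $\bG/\bL$, hence lies inside some maximal compact $\bar{s}\bK\bar{s}^{-1}$ (every compact subgroup of an almost-connected locally compact group is contained in a maximal compact one), and a chosen lift $\tilde{s}\in \pi^{-1}(\bar{s})$ places $h$ inside the closed subgroup $\tilde{s}\bH\tilde{s}^{-1}$.

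At that point $V\cap \tilde{s}\bH\tilde{s}^{-1}$ is a non-empty relatively open subset of $\tilde{s}\bH\tilde{s}^{-1}$. Since almost-ellipticity transfers to conjugates, this subset contains an element $h'$ elliptic in $\tilde{s}\bH\tilde{s}^{-1}$; because that subgroup is closed in $\bG$, ellipticity there coincides with ellipticity in $\bG$, so $h'\in V$ is the desired approximation. The main conceptual obstacle is simply the gap between ``$\pi(h)$ elliptic'' and ``$h$ elliptic'', which is bridged precisely by the second approximation step inside the conjugate of $\bH$; all remaining ingredients (openness of $\pi$, closedness of $\bH=\pi^{-1}(\bK)$, conjugacy of maximal compacts in $\bG/\bL$, and the containment of every compact subgroup in a maximal one) are standard features of the almost-connected locally compact setting.
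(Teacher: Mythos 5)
Your argument is correct and is essentially the paper's own proof: approximate $\pi(g)$ by an elliptic element of $\bG/\bL$, note that its closed cyclic hull sits in a maximal compact of $\bG/\bL$ so the approximant lies in a conjugate of $\pi^{-1}(\bK)$, and then approximate once more inside that (closed, hence elliptically compatible) subgroup using its almost-ellipticity. The extra details you supply (almost-connectedness of $\bG/\bL$, lifting the conjugating element, conjugation-invariance of almost-ellipticity) are exactly what the paper's terser wording implicitly relies on.
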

\begin{proof}
  Almost-connected locally compact groups have maximal compact subgroups, all mutually conjugate, and automatically connected (\cite[\S 4.13, first Theorem]{mz}, \cite[Theorems A, B and C]{zbMATH00540631}. etc.); it follows that `any' and `every' are indeed interchangeable in \Cref{eq:preim.max.cpct}.

  The image $\overline{x}\in \bG/\bL$ of any $x\in \bG$ is by assumption arbitrarily approximable by elements belonging to compact $\bK\le \bG/\bL$, which subgroups may as well be assumed maximal. This means that $x$ is arbitrarily close to elements belonging to subgroups $\pi^{-1}\left(\bK\right)$; in turn, those elements are close to compact subgroups of $\pi^{-1}(\bK)$ (hence also of $\bG$).
\end{proof}

In the much same spirit, the implication of \Cref{pr:gmodl2g} can occasionally be reversed.

\begin{theorem}\label{th:ext.by.rd.perm}
  Let $\bG$ be a connected locally compact group and $\bL\trianglelefteq \bG$ a normal closed vector subgroup.

  $\bG$ is almost elliptic if and only if $\bG/\bL$ is, along with all (equivalently, any one) subgroup of $\bG$ of the form \Cref{eq:preim.max.cpct}. 
\end{theorem}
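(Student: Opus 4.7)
The direction $(\Leftarrow)$ is exactly \Cref{pr:gmodl2g}. For $(\Rightarrow)$, almost-ellipticity of $\bG/\bL$ is immediate from \Cref{le:alm.ell.perm}\Cref{item:le:alm.ell.perm:quot.opn}, so the substance lies in showing $\bH := \pi^{-1}(\bK)$ is almost-elliptic for a (equivalently, any) maximal compact $\bK \le \bG/\bL$. Reducing to $\bG$ Lie via \Cref{cor:lie.enough}, the extension $1 \to \bL \to \bH \to \bK \to 1$ splits as $\bH \cong \bL \rtimes_\rho \bK$ (since $\bL$ is simply-connected solvable and $\bK$ compact), so by \Cref{th:no.triv.wt} it is enough to show the $\bK$-representation $\rho$ on $\bL$ (by conjugation in $\bG$) is trivial-weight-free.

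Suppose for contradiction that $\rho$ carries a trivial $\bT$-weight for some maximal torus $\bT \le \bK$. The proof of $\Cref{item:th:no.triv.wt:dns.ell} \Rightarrow \Cref{item:th:no.triv.wt:ntw}$ inside \Cref{th:no.triv.wt} exhibits a non-empty open subset $\bK_{\min} \subseteq \bK$ (the elements $s$ at which $\dim\ker(1-\rho(s))$ attains its positive minimum) such that for each $s \in \bK_{\min}$ and $0 \ne v \in \ker(1-\rho(s))$ the element $vs \in \bH$ has an $\bH$-open neighborhood $W$ free of elliptic elements. My plan is to enlarge $W$ to a $\bG$-open elliptic-free neighborhood of $vs$, contradicting almost-ellipticity of $\bG$.

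The enlargement is effected via \Cref{thn:cj.cls}. Fix a small symmetric open $V \ni 1$ in $\bG$ (to be shrunk) and apply \Cref{thn:cj.cls} to $\bG/\bL$ with maximal compact $\bK$ and $\bG/\bL$-neighborhood $\pi(V)$ of $\bar 1$, producing an open dense $U_\bK \subseteq \bK$ such that every $\bar g \in \bG/\bL$ close enough to $U_\bK$ admits $\bar c \in \pi(V)$ with $\bar c^{-1}\bar g\bar c \in \bK$. Density of $U_\bK$ meets openness of $\bK_{\min}$, so I pick $s \in U_\bK \cap \bK_{\min}$. For $g \in \bG$ sufficiently close to $vs$, $\pi(g)$ is close to $s \in U_\bK$, yielding $\bar c \in \pi(V)$ with $\bar c^{-1}\pi(g)\bar c \in \bK$; a continuous local cross-section of $\pi: \bG \to \bG/\bL$ (available since $\pi$ is locally trivial as a principal $\bL$-bundle) lifts $\bar c$ to some $c \in V$, giving $c^{-1}gc \in \pi^{-1}(\bK) = \bH$ close to $vs$. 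Shrinking $V$ and the $\bG$-neighborhood from which $g$ is drawn forces $c^{-1}gc \in W$; since $W$ contains no elliptic elements and ellipticity is conjugation-invariant, $g$ itself cannot be elliptic.

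The principal obstacle is coordinating the nested neighborhood choices: $W$ depends on $s$, the choice $s \in U_\bK \cap \bK_{\min}$ requires $U_\bK$ first, $U_\bK$ depends on $V$, and $V$ must be shrunk last so that conjugation by $c \in V$ displaces $g$ by at most the slack allotted for landing $c^{-1}gc$ inside $W$. The density clause in \Cref{thn:cj.cls} is precisely what makes $U_\bK \cap \bK_{\min}$ non-empty, meshing with the openness of $\bK_{\min}$ supplied by the trivial-weight hypothesis via \Cref{th:no.triv.wt}; this is the essential role played by the vector-group hypothesis on $\bL$.
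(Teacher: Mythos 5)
Your overall route is the paper's: reduce to $\bG$ Lie, split $\pi^{-1}(\bK)\cong\bL\rtimes\bK$, reduce via \Cref{th:no.triv.wt} to trivial-weight-freeness of the $\bK$-action on $\bL$, and, assuming a trivial weight, use the conjugation result (\Cref{th:sel.cj}, alias \Cref{thn:cj.cls}) to push elliptic elements of $\bG$ near a bad point $vs$ into $\pi^{-1}(\bK)$ and contradict what happens there. (The paper finishes with a spectral-continuity argument via Newburgh; your reuse of the elliptic-free $\bH$-neighborhood $W$ extracted from the proof of \Cref{th:no.triv.wt} is a legitimate shortcut, and your explicit choice $s\in U_{\bK}\cap\bK_{\min}$ is a point the paper glosses over.) The genuine gap is the one you name and then do not close: the dependency cycle $V\to U_{\bK}\to s\to W\to V$ is circular as you have arranged it, and ``$V$ must be shrunk last'' is not available to you, because \Cref{thn:cj.cls} as stated produces $U_{\bK}$ only \emph{after} $V$ is fixed; shrinking $V$ at the end hands you a new $U_{\bK}$, with no guarantee that your chosen $s\in U_{\bK}\cap\bK_{\min}$ (hence $W$ and its slack) survives. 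As written, the quantifiers do not land, so the proof does not yet go through.

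The cycle can be broken, but that takes an input you have not supplied. Either (i) note from the proof of \Cref{th:sel.cj} that the dense subset can be taken to be the fixed set $\bK_{gen}$, independent of $V$ --- only the ``sufficiently close'' threshold shrinks with $V$ --- so you may first pick $s\in\bK_{gen}\cap\bK_{\min}$ and $0\ne v\in\ker(1-\rho(s))$, then $W$, then $V$ small enough that conjugation by elements of $V$ displaces points near $vs$ by less than the slack of $W$, and only then the neighborhood from which $g$ is drawn; or (ii) keep the black-box statement and run a Baire-type diagonal: take a countable neighborhood basis $V_n\downarrow\{1\}$ (available once $\bG$ is Lie), intersect the corresponding dense open sets $U_n\subseteq\bK$, and choose $s\in\bK_{\min}\cap\bigcap_n U_n$, after which your intended order of choices becomes legitimate. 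Two smaller repairs: \Cref{thn:cj.cls} is only established (through \Cref{th:sel.cj}) for \emph{elliptic} $t$, so restrict the conjugation step to elliptic $g$ near $vs$ from the outset --- which is all you need, since the goal is to show no elliptic $g$ lies there; and when lifting $\bar c$ through a local cross-section you should apply the theorem in $\bG/\bL$ with a quotient neighborhood chosen so that the section carries it into $V$, rather than with $\pi(V)$ itself.
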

\begin{proof}
  It will be harmless (\Cref{cor:lie.enough}) to assume $\bG$ Lie. The backward implication is covered by \Cref{pr:gmodl2g}, and by \Cref{le:alm.ell.perm}\Cref{item:le:alm.ell.perm:quot.by.cpct} the quotient $\bG/\bL$ certainly is almost-elliptic whenever $\bG$ is; the more substantive claim is the almost-ellipticity of all (equivalently, at least one, by mutual conjugacy) $\pi^{-1}(\bK)$, assuming that of $\bG$.

  We have a splitting
 \begin{equation*}
   \pi^{-1}(\bK)
   \cong
   \bL\rtimes \bK
   \quad
   \left(\text{\cite[Th\'eor\`eme 1]{SSL_1954-1955__1__A24_0}, \cite[Exercise 19 for \S III.9]{bourb_lie_1-3}, etc.}\right),
 \end{equation*}
 so \Cref{th:no.triv.wt} applies: the claim is that $\bK$ operates on $\bL\cong \bR^d$ with no trivial weights. Assume otherwise, and (as in the proof of \Cref{th:no.triv.wt}) consider an element
 \begin{equation*}
   (v,s)\in \pi^{-1}(\bK)\cong \bL\rtimes \bK
   ,\quad
   v\ne 0
   ,\quad
   sv=v
 \end{equation*}
 with $s\in \bK$ having minimal-dimensional 1-eigenspace and a convergent \emph{net} \cite[Definition 11.2]{wil_top}
 \begin{equation*}
   \bG\ni
   \text{ elliptic }
   t_{\alpha}
   \xrightarrow[\quad\alpha\quad]{\quad}
   (v,s)
   \in \pi^{-1}(\bK)\le \bG.
 \end{equation*}
 The quotient $\bG/\bL$ operates on $\bL$ (for the latter is abelian), and we conflate the elements $\pi(t)$, $t\in \bG$ with the operators they induce on $\bL$. Moreover, by \Cref{th:sel.cj}\Cref{item:th:sel.cj:prx.el} below, $t_{\alpha}$ can be assumed respectively conjugate by $u_{\alpha}\xrightarrow[\alpha]{}1$ to elements in $\pi^{-1}(\bK)\cong \bL\rtimes \bK$; we thus assume $t_{\alpha}\in \pi^{-1}(\bK)$.
 
 The elliptic $\pi(t_{\alpha})$ are all semisimple and hence
 \begin{equation}\label{eq:im.t}
   \im\left(1-\pi(t_{\alpha})\right)|_{\bL_{\bC}:=\bL\otimes_{\bR}\bC}
   =
   \bigoplus_{\lambda\ne 1} \ker\left(\lambda-\pi(t_{\alpha})\right). 
 \end{equation}
 The convergence $\pi(t_{\alpha})\xrightarrow[\alpha]{}s$ entails \cite[Theorem 3]{Newburgh} spectrum convergence, so for the $\lambda\ne 1$ of \Cref{eq:im.t} we have
 \begin{equation}\label{eq:ll0}
   \forall\left(\varepsilon>0\right)
   \ 
   \exists\left(1\ne \lambda_0\in\text{spectrum of }s\right)
   \quad:\quad
   |\lambda-\lambda_0|<\varepsilon
   \text{ for large $\alpha$}.
 \end{equation}
 The assumed ellipticity of $t_{\alpha}$ implies
 \begin{equation*}
   v_{\alpha}\overset{\text{cf. \Cref{eq:v.in.im}}}{\in} \im\left(1-t_{\alpha}\right)
   \xRightarrow[\quad]{\quad\text{\Cref{eq:ll0}}\quad}
   \pi(t_{\alpha})v_{\alpha}-v_{\alpha}
   \text{ bounded away from 0}.
 \end{equation*}
 This contradicts the convergence $t_{\alpha}\xrightarrow[\alpha]{}(v,s)$, given that $sv=v$.
\end{proof}

An \emph{almost-containment} $V\succeq U$ in a topological space means that $V$ contains an open dense subset of $U$. Denoting interiors by `$\circ$' decorations, $V=\overset{\circ}{V}\succeq U$ then says that $V$ is an open \emph{almost-neighborhood} of $U$.  

\begin{theorem}\label{th:sel.cj}
  Let $\bK\le \bG$ be a maximal compact subgroup of a connected locally compact group. 
  
  \begin{enumerate}[(1)]

  \item\label{item:th:sel.cj:prx.gp} We have
    \begin{equation*}
      \forall\left(V=\overset{\circ}{V}\supseteq \bK\right)
      \exists\left(U=\overset{\circ}{U}\succeq \bK\right)
      \forall\left(\text{elliptic }t\in U\right)
      \quad:\quad
      \Braket{t}\subseteq V,
    \end{equation*}
    with $\Braket{\bullet}$ denoting generated subgroups. 
    
  \item\label{item:th:sel.cj:prx.el} There is a function
    \begin{equation*}
      \left(U=\overset{\circ}{U}\succeq \bK\right)
      \xrightarrow[\quad\text{continuous at 1}\quad]{\quad u_{\bullet}\quad}
      \bG
      \quad\text{with}\quad
      \left\{
        \begin{aligned}
          u_1
          &=1\\
          \Ad_{u_t} t
          &\in \bK
            ,\quad
            \forall\text{ elliptic }t\in U.
        \end{aligned}
      \right.
    \end{equation*}
  \end{enumerate} 
\end{theorem}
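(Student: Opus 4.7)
I would prove part (2) by an implicit function theorem on a tubular neighborhood of $\bK$, restricted to a ``regular'' locus, and bootstrap part (1) from it. By \Cref{cor:lie.enough} it suffices to treat the connected Lie case. Fix an $\Ad_\bK$-invariant inner product on $\fg$ with associated orthogonal decomposition $\fg=\fk\oplus\fp$, $\fp$ stable under $\Ad_\bK$. Decompose further $\fp=\fp_0\oplus\fp_1$, where $\fp_0:=\fp^\bK$ collects the trivial $\bK$-weights and $\fp_1$ the non-trivial ones. The map $\Phi(X,k):=\exp(X)\cdot k$ is a diffeomorphism from a neighborhood of $\{0\}\times\bK$ onto a tubular neighborhood of $\bK$ in $\bG$, giving coordinates $t\leftrightarrow(X_0(t)+X_1(t),\, k(t))$ for $t$ near $\bK$.

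\noindent\textbf{Implicit function construction.} Let $U_0:=\{k\in\bK:(1-\Ad_k)|_{\fp_1}\ \text{is invertible}\}$; by \Cref{pr:rep.content} applied to the trivial-weight-free representation $\bK\circlearrowright\fp_1$, $U_0$ is open dense in $\bK$. For $k\in U_0$ and $t$ near $k$, the map $\fp_1\ni Y\mapsto\bigl(\exp(Y)\,t\,\exp(-Y)\bigr)_{\fp_1}$ has linearization $Y\mapsto(1-\Ad_{k(t)})Y$ at $Y=0$, invertible on $\fp_1$. The implicit function theorem then yields a continuous $Y=Y(t)\in\fp_1$, defined on an open neighborhood of $U_0$ in $\bG$ with $Y(k)=0$ for $k\in U_0$, solving $\bigl(\exp(Y(t))\,t\,\exp(-Y(t))\bigr)_{\fp_1}=0$. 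Set $u_t:=\exp(Y(t))$. By construction, $u_t t u_t^{-1}=\exp(Z)\cdot k''$ for some $Z\in\fp_0$, $k''\in\bK$.

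\noindent\textbf{Elliptic constraint.} Since $[\fk,\fp_0]=0$ and $\bK=\exp(\fk)$ (connected compact), $\exp(Z)$ centralizes $\bK$, so $(\exp(Z)k'')^n=\exp(nZ)\cdot(k'')^n$ exactly. Ellipticity of $t$ (preserved under conjugation) forces $\{\exp(nZ):n\in\bZ\}$ to be relatively compact in $\bG$. Its closure $\overline{\exp(\bR Z)}$ must then be a torus $T$ --- if not, the $\bZ$-iterates would escape any compact set --- and $T$ still centralizes $\bK$. Thus $T\cdot\bK$ is a compact subgroup containing $\bK$, so $T\subseteq\bK$ by maximality, whence $Z\in\fp\cap\fk=\{0\}$ and $u_t t u_t^{-1}=k''\in\bK$.

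\noindent\textbf{Continuity and deduction of (1).} Since the implicit function degenerates at $k=1$ (where $\Ad_1-1=0$), I restrict the domain to $U:=\{t:\|X_1(t)\|<\varphi(k(t))\}$ for a continuous $\varphi:\bK\to[0,\infty)$ with $\varphi^{-1}(0)=\bK\setminus U_0$, small enough that the IFT estimate $\|Y(t)\|\lesssim\|(1-\Ad_{k(t)})|_{\fp_1}^{-1}\|\cdot\|X_1(t)\|$ drives $Y(t)\to 0$ as $t\to 1$ in $U$ (e.g.\ $\varphi(k):=\epsilon\,\|(1-\Ad_k)|_{\fp_1}\|^{2}$). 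Then $U$ is open in $\bG$, $U\cap\bK=U_0$ is open dense, and the extension $u_1:=1$ is continuous at $1$. For part (1), given an open $V\supseteq\bK$, shrink $U$ further to $U_V:=\{t\in U:u_t^{-1}\bK u_t\subseteq V\}$, still an open almost-neighborhood of $\bK$ as $u_k=1$ for $k\in U_0$; for elliptic $t\in U_V$, $\Braket{t}=u_t^{-1}\Braket{u_t t u_t^{-1}}u_t\subseteq u_t^{-1}\bK u_t\subseteq V$. The principal obstacle is the elliptic step, which leans on the non-trivial structural fact that any torus of $\bG$ centralizing a maximal compact lies in that maximal compact; the other delicate point is choosing $\varphi$ so as to simultaneously secure openness of $U$ and continuity of $u$ at $1$.
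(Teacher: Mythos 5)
Your construction breaks at its first load-bearing step. You define $\fp_1$ as the sum of the non-trivial $\bK$-isotypic components of $\fp$ and then invoke \Cref{pr:rep.content} to conclude that $U_0=\{k\in\bK:\ (1-\Ad_k)|_{\fp_1}\text{ invertible}\}$ is open dense. But \Cref{pr:rep.content} requires all \emph{weights} with respect to a maximal (pro-)torus to be non-trivial, which is strictly stronger than the absence of trivial $\bK$-subrepresentations that $\fp_1$ has by construction. The two conditions genuinely diverge exactly in the case the theorem is most needed for: take $\bG=\mathrm{SL}(2,\bC)$, $\bK=\mathrm{SU}(2)$, so that $\fp\cong i\,\mathfrak{su}(2)$ is the adjoint representation. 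It has no trivial $\bK$-summand (so $\fp_1=\fp$, $\fp_0=0$), yet every $k\in\bK$ fixes a non-zero vector of $\fp$ (its own Cartan direction), so $(1-\Ad_k)|_{\fp_1}$ is never invertible and $U_0=\varnothing$. Your implicit-function locus is then empty and the argument produces no $U$ and no $u_\bullet$ at all. This happens whenever a maximal torus of $\bK$ fails to be a Cartan subgroup of $\bG$ (all complex semisimple $\bG$, for instance), so it is the main case rather than a removable corner. The step you flagged as the principal obstacle --- that the residual $\fp_0$-part $\exp(Z)$ centralizes the connected $\bK$ and that ellipticity plus maximality of $\bK$ force $Z=0$ --- is in fact correct and rather nice; the trouble is that you only ever reach it over the possibly empty set $U_0$.

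Compare with what the paper does: after linearizing (passing to $\Ad(\bG)$ and then a closed embedding $\bG\le\mathrm{GL}(n,\bC)$), it works near the \emph{generic} locus $\bK_{gen}$ of elements on which the weights of the embedding take distinct values --- a set that really is open dense --- and for elliptic $t$ near such an $s$ it compares the eigenspace decomposition of $t$ with that of $s$ (close in the Grassmannian), conjugating $t$ by an element near $1$ into the torus diagonal with respect to that decomposition inside $\bK$. No invertibility of $1-\Ad_s$ transverse to $\fk$ is ever required; repairing your scheme would mean replacing $U_0$ by such a regular locus and killing only the components transverse to the relevant centralizer, which essentially reproduces the paper's spectral argument. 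Two smaller points: \Cref{cor:lie.enough} is about almost-ellipticity and does not by itself justify the reduction of this conjugacy statement to Lie $\bG$ (the paper uses the Montgomery--Zippin approximation by Lie quotients with small compact normal kernels, plus the Montgomery--Zippin conjugacy theorem to pass from subgroups to conjugators near $1$); and your final domain is $U\cup\{1\}$ with $\varphi(1)=0$, hence $1\notin U$ and $U\cup\{1\}$ is not open, so the formal requirements $1\in U=\overset{\circ}{U}$ and $u_1=1$ are not met as stated.
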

\begin{proof}
  Recall the usual approximation argument \cite[\S 4.6, Theorem]{mz}: $\bG$ has arbitrarily small normal compact subgroups with Lie quotients. This allows the standing assumption that $\bG$ is Lie.

  By \cite[Theorem 1 and Corollary]{MR6545} compact subgroups contained in sufficiently small neighborhoods of $\bK$ are conjugate to subgroups of $\bK$ by elements close to $1\in \bG$ (see also \cite[Theorem 3.4]{Chirvasitu2024} for the \emph{Banach}-Lie-group version of that result). \Cref{item:th:sel.cj:prx.el}, consequently, reduces to \Cref{item:th:sel.cj:prx.gp}. The rest of the proof focuses on the latter: the claim is that elliptic elements in judiciously-chosen almost-neighborhoods of $\bK\le \bG$ generate subgroups arbitrarily close to $\bK$.

  
  \begin{enumerate}[(I),wide]
  \item \textbf{: reduction to the linear case.} Recall \cite[Definition 5.32]{hm5} that Lie groups are \emph{linear} when they embed into $\mathrm{GL}(n,\bC)$; under the present (almost-)connected assumption the embedding can always be assumed closed (e.g. \cite[Proposition 5]{0801.4234v1}).

    One can always substitute for $\bG$ the image $\Ad(\bG)$ of its adjoint representation
    \begin{equation*}
      \bG
      \xrightarrow{\quad\Ad\quad}
      \mathrm{GL}(\fg)
      ,\quad
      \fg:=Lie(\bG).
    \end{equation*}
    Indeed, if the conclusion (\Cref{item:th:sel.cj:prx.gp} and hence also \Cref{item:th:sel.cj:prx.el}, as noted) is known to hold for $\Ad(\bG)$, then elliptic $t\in \bG$ sufficiently close to $\bK$ can be conjugated into $\bK\cdot \ker\Ad$ by elements close to $1\in \bG$. The conclusion follows from the fact that for \emph{central} closed $\bH\le \bG$ the identity component $\left(\bH\cdot \bK\right)_0$ of $\bH\cdot \bK$ contains $\bK$ as its \emph{unique} maximal compact subgroup.
      
  \item \textbf{: the linear case.} We can now assume $\bG$ to be a closed subgroup of some $\mathrm{GL}(\bC^n)$. The desired open sets $U$ will range over neighborhoods of
    \begin{equation*}
      \bK_{gen}
      :=
      \left\{s\in \bK\ :\ s\text{ \emph{generic} with respect to the embedding }\bK\le \mathrm{GL}(n,\bC)\right\}
      \overset{\text{open dense}}{\subseteq}
      \bK:
    \end{equation*}
    the weights of the underlying $\bK$-representation on $\bC^n$ take distinct values on $s$ (no matter how one realizes $s$ as an element of a maximal torus $\bT\le \bK$). Equivalently, the genericity condition says simply that the \emph{isotypic components} \cite[post Proposition II.1.14]{btd_lie_1995} of $\bC^n$ with respect to $\overline{\Braket{s}}$ coincide with those of $\bT\ni s$ for any maximal torus $\bT\le \bK$.

    For generic $s\in \bK_{gen}$ the ambient space $\bC^n$ decomposes as
    \begin{equation}\label{eq:c.wt.sp}
      \bC^n=\bigoplus_{i=1}^d \bC^n_{s,i}
      ,\quad
      \bC^n_{s,i}\text{ the weight spaces of }\Braket{s},
    \end{equation}
    with genericity ensuring this is also the weight-space decomposition for any maximal torus $\bK\ge \bT$ containing $s$. Such a torus can be identified with $\bT^d\cong\left(\bS^1\right)^d$, each circle factor respectively acting as scalars on one of the $\bC^n_{s,i}$ summands. 

    Elliptic elements $t\in \bG$ sufficiently close to $s$ will again be generic (relative to maximal compact subgroups containing them), and will induce analogous decompositions $\bC^n=\bigoplus_{i=1}^d \bC^n_{t,i}$ (same $d$). Closeness $t\simeq s$ means closeness
    \begin{equation*}
      \forall\left(1\le i\le d\right)
      \quad:\quad
      \bC^n_{t,i}\simeq \bC^n_{s,i}
    \end{equation*}
    in the respective Grassmannians of $\bC^n$, hence $T\simeq 1\in \mathrm{GL}(n,\bC)$ conjugating $t$ inside the torus $\bT^d\cong \bT$ identified above, acting diagonally with respect to \Cref{eq:c.wt.sp}.
  \end{enumerate}
\end{proof}

\begin{remark}\label{re:alm.nbhd.nec}
  The various caveats in the statement of \Cref{th:sel.cj} are certainly not all redundant. The maximality of $\bK$, for instance, does play a role: the statement does not hold for embeddings
  \begin{equation*}
    \bK:=\bS^1\le \bT^2:=\left(\bS^1\right)^2=:\bG,
  \end{equation*}
  given that the set
  \begin{equation*}
    \left\{t\in \bT^2\ :\ \overline{\Braket{t}}=\bT^2\right\}
    \subseteq
    \bT^2
  \end{equation*}
  of \emph{topological generators} is dense \cite[Theorem IV.2.11(ii)]{btd_lie_1995}, so every $s\in \bS^1$ is arbitrarily approximable by (elliptic) elements topologically generating groups $\overline{\Braket{t}}$ \emph{not} close to $\bS^1$ in the \emph{upper Vietoris topology} \cite[\S 1.2]{ct_vietoris}.

  On the other hand, \Cref{ex:alm.nbhd.nec} below shows that one cannot generally strengthen the almost-containment $U\succeq \bK$ of \Cref{th:sel.cj}\Cref{item:th:sel.cj:prx.gp} to actual containment. 
\end{remark}

\begin{example}\label{ex:alm.nbhd.nec}
  Set
  \begin{equation*}
    \bK
    :=
    \text{unitary group }\bU(n)
    \le
    \bG
    :=
    \mathrm{GL}(n,\bC).
  \end{equation*}
  Consider decompositions $\bC^n=\bC^{n-1}\oplus \ell$ for lines $\ell$ approaching $\bC^{n-1}$ in he sense that
  \begin{equation*}
    \ell
    \xrightarrow{\quad\text{upper Vietoris}\quad}
    \bP \bC^{n-1}
    \subset
    \bP \bC^n
    ,\quad
    \bP\bullet:=\text{projective space of lines in $\bullet$}. 
  \end{equation*}
  The operators $t_{\ell,\lambda}$ with
  \begin{equation*}
    \begin{aligned}
      \ker\left(1-t_{\ell,\lambda}\right)
      &=
        \bC^{n-1}\\
      \ker\left(\lambda-t_{\ell,\lambda}\right)
      &=
        \ell
    \end{aligned}
    ,\quad
    \lambda\in \bS^1
  \end{equation*}
  are all elliptic, and approach $1\in \bK$ if $\lambda\to 1$ sufficiently fast. On the other hand, if $\lambda\ne 1$ we have $|\lambda^k-1|\ge \sqrt 3$ for some $k$, so that
  \begin{equation*}
    \lim_{\ell}\sup_{k}\left\|t^k_{\lambda,\ell}-1\right\|
    =
    \lim_{\ell}\sup_{k}\left\|t_{\lambda^k,\ell}-1\right\|
    =\infty.
  \end{equation*}
  In particular, the groups $\Braket{t_{\ell,\lambda}}$ will never be contained in a sufficiently small neighborhood of $\bK=\bU(n)$. 
\end{example}

\pf{thn:conn.cpct.by.ss.solv}
\begin{thn:conn.cpct.by.ss.solv}
  As observed in the proof of \Cref{le:alm.surj.cbd}, \Cref{eq:k.by.l} splits and the resulting non-normal semidirect factors are all maximal compact and mutually conjugate, so we can fix a decomposition $\bG\cong \bL\rtimes \bK$ once and for all and identify all of the statement's $\bM$ with the single $\bK$. 

  Proposition \Cref{pr:rep.content} handles the mutual equivalence of conditions \Cref{item:thn:conn.cpct.by.ss.solv:kfr.dns}-\Cref{item:thn:conn.cpct.by.ss.solv:ntw}; denoting that cluster of claims by `$\left(\cat{rep}\right)$', we are left having to prove the unmarked implications in
  \begin{equation*}
    \begin{tikzpicture}[>=stealth,auto,baseline=(current  bounding  box.center)]
      \path[anchor=base] 
      (0,0) node (l) {$\left(\cat{rep}\right)$}
      +(2,.7) node (u) {\Cref{item:thn:conn.cpct.by.ss.solv:op.dns.ell}}
      +(2,-.7) node (d) {\Cref{item:thn:conn.cpct.by.ss.solv:dns.ell}}
      ;
      \draw[-implies,double equal sign distance] (l) to[bend left=20] node[pos=.5,auto] {$\scriptstyle $} (u);
      \draw[-implies,double equal sign distance] (u) to[bend left=20] node[pos=.5,auto] {$\scriptstyle \text{obvious}$} (d);
      \draw[-implies,double equal sign distance] (d) to[bend left=20] node[pos=.5,auto,swap] {$\scriptstyle $} (l);
    \end{tikzpicture}
  \end{equation*}
  
  \begin{enumerate}[label={},wide]
  \item \textbf{$\left(\cat{rep}\right)$ $\xRightarrow{\quad}$ \Cref{item:thn:conn.cpct.by.ss.solv:op.dns.ell}.} \Cref{le:all.cbdr} shows that
  \begin{equation*}
    \left(\forall s\in \bK\right)
    \left(
      1-\Ad_s\text{ invertible}
      \xRightarrow{\quad}
      \left\{x^{-1}\cdot \Ad_s x\ :\ x\in \bL\right\}
      =
      \bL
    \right).
  \end{equation*}
  The conclusion then follows from \Cref{le:alm.surj.cbd}\Cref{item:le:alm.surj.cbd:set}, given that the set of $s\in \bK$ with invertible $1-\Ad_s$ is (assuming $\left(\cat{rep}\right)$) open dense. 

  \item \textbf{\Cref{item:thn:conn.cpct.by.ss.solv:dns.ell} $\xRightarrow{\quad}$ $\left(\cat{rep}\right)$.} We argue by induction on the length of the closed derived series $\left(\bL^{((n))}\right)_{n}$ (consisting of \emph{characteristic} \cite[p.104, Definition preceding Lemma 5.20]{rot-gp}, hence $\bK$-invariant closed normal subgroups of $\bL$).

    The base case of the induction is that of abelian $\bL$, taken care of by \Cref{th:no.triv.wt}. Denote next by $\bA\trianglelefteq \bL$ the bottom layer $\bA:=\bL^{((\mathrm{max}))}$ of the derived series. The almost-ellipticity of $\bG/\bA$ (\Cref{le:alm.ell.perm}\Cref{item:le:alm.ell.perm:quot.opn}) and the induction hypothesis imply $\left(\cat{rep}\right)$ for $\Ad:\bK\circlearrowright \fl/\fa$, with $\fl:=Lie(\bL)$ and $\fa:=Lie(\bA)$. 
    
    Call $s\in \bK$ as \emph{non-degenerate (on $\fl$)} if $\dim\ker\left(1-\Ad_s|_{\fl}\right)$ is minimal among such dimensions when ranging over $\bK$. The induction hypothesis further allows the assumption that
    \begin{equation*}
      \ker\left(1-\Ad_s|_{\fl}\right)
      =
      \ker\left(1-\Ad_s|_{\fa}\right),
    \end{equation*}
    so the goal is to show that in fact non-degenerate $s\in \bK$ have no 1-eigenvectors in $\fa$ (they have none in $\fl/\fa$ in any case). The set $\bK_{ndg}\subseteq \bK$ of non-degenerate elements is open, and over $\bK_{ndg}$ the trivial bundle with fiber $\bL$ decomposes as a \emph{fiber product} \cite[Definition 2.4.2]{hus_fib}
    \begin{equation*}
      \bK_{ndg}\times \bL
      \cong
      E_{\mathrm{fix}}
      \times_{\bK_{ndg}}
      E_{\mathrm{ess}}
      ,\quad
      \text{fibers}
      \left\{
        \begin{aligned}
          E_{\mathrm{fix},s'}&=\bL^{s'}:=\left\{g\in \bL\ :\ \Ad_{s'}g=g\right\}\\         
          E_{\mathrm{ess},s'}&=\im\left(g\xmapsto{\quad} g^{-1}\cdot \Ad_{s'}g\right)
        \end{aligned}
      \right.
      ,\quad
      s'\in \bK_{ndg}
    \end{equation*}
    (non-linear analogue of \Cref{eq:bdl.split.u}).

    It follows that $(g,s)\in \bL\rtimes \bK$ with non-trivial $g\in \bA\le \bL$ fixed by $s\in \bK_{ndg}$ cannot be approximated arbitrarily by elements
    \begin{equation*}
      (g',s')\in \bL\rtimes \bK
      ,\quad
      g'\in E_{\mathrm{ell},s'},
    \end{equation*}
    yielding the conclusion: by \Cref{le:alm.surj.cbd}\Cref{item:le:alm.surj.cbd:set}, said elements are precisely the elliptic ones.  \qedhere
  \end{enumerate}
\end{thn:conn.cpct.by.ss.solv}

\begin{lemma}\label{le:all.cbdr}
  Let $\bL$ be a connected, simply-connected solvable Lie group and $\varphi\in \Aut(\bL)$ with $1-\varphi$ invertible on $\fl:=Lie(\bL)$.

  The map
  \begin{equation*}
    \bL\ni s
    \xmapsto{\quad\delta=\delta_{\varphi}\quad}
    s^{-1}\cdot \varphi(s)
    \in \bL
  \end{equation*}
  is onto. 
\end{lemma}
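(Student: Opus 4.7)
My plan is to induct on the derived length of $\bL$. The \emph{base case} of abelian $\bL$ is immediate: $\exp:\fl\to\bL$ is then a diffeomorphism identifying $\varphi$ with the linear automorphism $\varphi_*\in GL(\fl)$, and $\delta$ with $\varphi_*-1$, which is invertible by hypothesis and hence onto.

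\textbf{Inductive step.} I would take $\bA$ to be the last non-trivial term of the closed derived series of $\bL$: this is abelian, connected, simply-connected, closed, and characteristic, hence $\varphi$-invariant. The quotient $\bar{\bL}:=\bL/\bA$ has strictly shorter derived length, and the short exact sequence $0\to\fa\to\fl\to\fl/\fa\to 0$ of $\varphi_*$-modules shows that $1-\varphi_*|_\fa$ and $1-\bar\varphi_*$ are both invertible (the eigenvalues of $\varphi_*$ split between subspace and quotient). Hence the base case applied to $\bA$ and the induction hypothesis applied to $\bar{\bL}$ jointly yield that both $\delta_{\varphi|_\bA}:\bA\to\bA$ and $\delta_{\bar\varphi}:\bar\bL\to\bar\bL$ are onto.

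Given a target $t\in\bL$, I would first solve $\delta_{\bar\varphi}(\bar y)=\bar t$ in $\bar\bL$, lift to $y\in\bL$, and record $y^{-1}\varphi(y)=a\cdot t$ for some $a\in\bA$. Then I would seek $z$ in the form $z=b\cdot y$ with $b\in\bA$ to be determined; a direct computation using $\varphi(\bA)=\bA$ and normality of $\bA$ gives
\begin{equation*}
z^{-1}\varphi(z) \;=\; y^{-1}\delta_\varphi(b)\,\varphi(y) \;=\; \Ad_{y^{-1}}\!\bigl(\delta_\varphi(b)\bigr)\cdot a\cdot t.
\end{equation*}
Setting this equal to $t$ reduces the problem to solving $\delta_{\varphi|_\bA}(b)=\Ad_y(a^{-1})$ inside $\bA$, which is possible because $\delta_{\varphi|_\bA}$ is already known to be onto and because $\Ad_y(a^{-1})\in\bA$ by normality of $\bA$.

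\textbf{The main obstacle} lies in picking the correct side for the adjustment. The symmetric attempt $z=y\cdot b$ yields an equation whose linearization on $\fa$ is $(1-\Ad_{t^{-1}}\!\circ\varphi_*)(b)=a$, and the hypothesis on $\varphi_*$ does \emph{not} guarantee invertibility of $1-\Ad_{t^{-1}}\!\circ\varphi_*$ — easy counterexamples arise already in the ``$ax+b$''-group, where $\Ad_{t^{-1}}$ can contribute a scalar that cancels $\varphi_*|_\fa$. Left-multiplication sidesteps this entirely: the twist $\Ad_{y^{-1}}$ sits \emph{outside} the $\delta$-operator and merely relocates the target inside $\bA$, leaving $\delta_{\varphi|_\bA}$ itself (already surjective by the base case) to do the work.
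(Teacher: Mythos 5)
Your proof is correct and follows essentially the same route as the paper: induction on the closed derived series with the abelian base case handled via $\exp$, and the identity $\delta_\varphi(b\,y)=\Ad_{y^{-1}}\bigl(\delta_\varphi(b)\bigr)\cdot\delta_\varphi(y)$ used to correct a lift of a solution modulo the bottom layer $\bA$. The only (harmless) difference is that the paper additionally records that $\delta_\varphi$ is a local diffeomorphism and phrases the final step as openness plus $\bA$-translation-invariance of $\im\delta_\varphi$, whereas you solve the fiber equation in $\bA$ directly by applying the base case to $\varphi|_{\bA}$.
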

\begin{proof}
  Identifying tangent spaces
  \begin{equation*}
    T_1 \bL
    \xrightarrow[\quad\cong\quad]{\quad\text{right translation }R_{g*}\quad}
    T_g \bL
    ,\quad
    g\in \bL,
  \end{equation*}
  the differential of $\delta$ at a generic point $s\in \bL$ is
  \begin{equation*}
    T_s\bL\ni
    v
    \xmapsto{\quad D\delta \quad}
    R_{\varphi(s)*}\Ad_{s^{-1}}\left(\varphi(v)-v\right)
    \in
    T_{\varphi(s)}\bL.
  \end{equation*}
  This is by assumption bijective, so $\delta$ is at any rate a local diffeomorphism \cite[Theorem 4.5]{lee2013introduction}.  
  
  The rest of the argument is inductive on the length of the closed derived series
  \begin{equation*}
    \bL=\bL^{((0))}
    \ge
    \bL^{((1))}
    \ge
    \cdots
    \ge
    \bA:=\bL^{((\mathrm{max}))}
    \gneq
    \{1\},
  \end{equation*}
  the base case (of abelian $\bL$) being clear. The induction hypothesis now delivers the conclusion for $\bL/\bA$, so that the composition
  \begin{equation*}
    \bL
    \xrightarrow{\quad\delta_{\varphi}\quad}
    \bL
    \xrightarrow{\quad\pi\quad}
    \bL/\bA
  \end{equation*}
  is onto. Observe also that
  \begin{equation*}
    \forall \left(a\in \bA\right)
    \forall\left(s\in \bL\right)
    \quad:\quad
    \delta(as)=\Ad_{s^{-1}}\left(\delta a\right)\cdot \delta s,
  \end{equation*}
  so $\im\delta\subset \bL$ is an open submanifold, surjecting onto $\bL/\bA$, and invariant under left translation by $\bA$. This suffices to conclude that $\im\delta=\bL$.
\end{proof}

The simple remark in \Cref{pr:ss.ell.cpct} shows that in a sense the \emph{radical} (\cite[\S III.9.7, Definition 1]{bourb_lie_1-3} for Lie groups, \cite[Definition 10.23]{hm_pro-lie-bk} for connected locally compact groups), containing $V\le V\rtimes \bK$ in \Cref{ex:dns.ell.set}, is unavoidable in trying to produce non-compact instances of almost ellipticity.

\emph{Semisimple} locally compact groups are as in \cite[Definition 10.27]{hm_pro-lie-bk} (\cite[\S I.15, discussion post Theorem 1.125]{Kna02} for Lie groups): connected with trivial radical; see also \cite[Theorem 10.29]{hm_pro-lie-bk} for multiple equivalent characterizations.

\begin{proposition}\label{pr:ss.ell.cpct}
  An almost-connected locally compact group with semisimple identity component is almost-elliptic if and only if it is compact.
\end{proposition}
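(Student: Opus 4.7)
The ``if'' direction is immediate: in a compact group every element generates a relatively compact subgroup. For the converse, suppose $\bG$ is almost-connected, locally compact, with $\bG_0$ semisimple and $\bG$ almost-elliptic; the goal is to force $\bG_0$ compact (whence $\bG$ is compact, by almost-connectedness). The plan proceeds in three steps.

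First I reduce to the connected Lie case. By \Cref{cor:lie.enough} it suffices to prove each compact-kernel Lie quotient $\bG/N$ is compact. Standard structure theory (quotients of semisimple pro-Lie groups by compact normal subgroups remain semisimple) ensures $(\bG/N)_0$ is a semisimple Lie group. Almost-connectedness of $\bG/N$ combined with its being Lie makes $(\bG/N)/(\bG/N)_0$ finite (a totally disconnected compact Lie group), so $(\bG/N)_0$ is clopen; \Cref{le:alm.ell.perm}\Cref{item:le:alm.ell.perm:quot.opn} then passes almost-ellipticity down. So it suffices to show: a connected semisimple Lie group $\bH$ that is almost-elliptic is compact.

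Second, I leverage the adjoint representation $\Ad:\bH\to \mathrm{GL}(\fh)$ with $\fh:=Lie(\bH)$. Any elliptic $h\in \bH$ has $\overline{\Braket{h}}$ compact, hence $\Ad(\overline{\Braket{h}})\le \mathrm{GL}(\fh)$ compact, whence every eigenvalue of $\Ad_h$ lies on $\bS^1$. Continuous dependence of the characteristic polynomial on $h$ makes the locus $\{h\in\bH:\text{spec}(\Ad_h)\subseteq \bS^1\}$ closed, and density of elliptic elements forces this locus to coincide with all of $\bH$.

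Third, I apply the Cartan decomposition $\bH=\bK\cdot\exp(\fp)$. For $X\in \fp$ the endomorphism $\ad X$ of $\fh$ is symmetric with respect to the inner product built from the Cartan involution, hence has real spectrum; so $\Ad_{\exp X}=\exp(\ad X)$ has positive-real spectrum, and unimodularity forces every eigenvalue to equal $1$. Thus $\ad X$ is nilpotent; being simultaneously semisimple (symmetric) it must vanish, placing $X$ in the center of $\fh$. Semisimplicity makes the center trivial, so $X=0$, hence $\fp=0$ and $\bH=\bK$ is compact. The main obstacle is really just the first reduction (preservation of semisimplicity under compact-kernel Lie quotients); the adjoint-plus-Cartan argument is then the crisp structural punchline.
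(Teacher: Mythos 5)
Your proof is correct, and its core is genuinely different from the paper's. After essentially the same reduction (via \Cref{cor:lie.enough} and \Cref{le:alm.ell.perm}\Cref{item:le:alm.ell.perm:quot.opn}) to a connected semisimple Lie group $\bH$, the paper invokes the global Iwasawa decomposition $\bH=\bK\bA\bN$: elements sufficiently close to nontrivial elements of $\bA$ cannot be elliptic because $\fa$ acts on $\fh$ with positive real eigenvalues, so $\bA$ is trivial, and then $\bN$ is trivial because $\fn=[\fa,\fn]$; compactness then follows from finiteness of $\pi_1$ of compact semisimple groups. You instead run a global spectral argument through $\Ad$: ellipticity forces $\mathrm{spec}(\Ad_h)\subseteq\bS^1$, this condition is closed, so density of elliptic elements gives it on all of $\bH$, and the Cartan decomposition then kills $\fp$ because $\mathrm{ad}(X)$, $X\in\fp$, is symmetric with real spectrum. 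Your route avoids the bracket identity $\fn=[\fa,\fn]$ and feels slightly more self-contained; the paper localizes the same spectral obstruction near $\bA$ and disposes of $\bN$ algebraically. Two minor points: (i) the preservation of semisimplicity of the identity component under compact-kernel Lie quotients, which you label ``standard structure theory,'' deserves the one-line justification that the Lie algebra of the quotient is a quotient of a semisimple (pro-)Lie algebra, hence semisimple, and that for connected groups triviality of the radical is detected on the Lie algebra -- though the paper is equally terse here; (ii) your closing ``hence $\fp=0$ and $\bH=\bK$ is compact'' elides that in the global Cartan decomposition the factor $\bK$ is compact only when the center of $\bH$ is finite; what you actually get from $\fp=0$ is that $\fh=\fk$ is of compact type, and compactness of $\bH$ then follows from Weyl's theorem (finite fundamental group of the compact form), which is precisely the fact the paper cites at the same juncture. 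So this is a presentational elision rather than a gap.
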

\begin{proof}
  \Cref{cor:lie.enough} reduces the problem to Lie groups, on which we henceforth focus. Only the $\left(\Rightarrow\right)$ implication requires any work. Note furthermore that
  \begin{itemize}[wide]
  \item almost-ellipticity passes over to quotients and hence requires finitely many components;

  \item and is inherited by open subgroups. 
  \end{itemize}
  It thus suffices, upon substituting the identity component $\bG_0$ for $\bG$, to assume the group connected. 
  
    Observe that the last two components $\bA$ and $\bN$ in the \emph{global Iwasawa decomposition} $\bG=\bK\bA\bN$ \cite[Theorem 6.46]{Kna02} must be trivial whenever $\bG$ has a dense elliptic \emph{set} (group or not):
  \begin{itemize}[wide]
  \item The Lie algebra $\fa$ of the (simply-connected) closed subgroup $\bA$ operates on $\fg:=Lie(\bG)$ with positive real eigenvalues \cite[Lemma 6.45]{Kna02}, so elements sufficiently close to non-trivial elements of $\bA$ cannot be elliptic.

  \item And $\bN$ is trivial because $\fn=[\fa,\fn]$ by \cite[Proposition 6.43]{Kna02} (where $\fn:=Lie(\bN)$).
  \end{itemize}
  This suffices to conclude: compact semisimple Lie groups have finite fundamental groups \cite[Table 1]{MR1039850} and hence compact \emph{universal covers} \cite[Theorem A2.20]{hm5}. 
\end{proof}



\pf{thn:char.conn.alm.ell.lie}
\begin{thn:char.conn.alm.ell.lie}
  As customary by now, we assume $\bG$ Lie: this will make no difference to the conclusion, by \Cref{cor:lie.enough}. The mutual conjugacy of maximal compact subgroups \cite[Theorems A, B]{zbMATH00540631} again justifies the quantifier flexibility, so it suffices to fix a maximal compact $\bK\le \bG$ throughout. \Cref{pr:ss.ell.cpct} moreover reduces the problem to considering only compact $\bG/\bG_{sol}$. 
  
  Note also that maximal compact subgroups $\bK\le \bG/\bG^{((n-1))}_{sol}$ always lift to maximal compact $\widetilde{\bK}\le \bG/\bG^{((n))}_{sol}$: there is \cite[Corollary X.1]{hm_split} a $\bK$-invariant decomposition
  \begin{equation*}
    \text{abelian connected }
    \bG^{((n-1))}_{sol}/\bG^{((n))}_{sol}
    \cong
    \bR^d\times \left(\text{torus }\bT^e\right),
  \end{equation*}
  and the quotient by the $\bR^d$ factor splits \cite[\S VII.3, Proposition 3]{bourb_int_7-9_en} as the desired maximal compact lift. This shows that \Cref{item:thn:char.conn.alm.ell.lie:glob} and \Cref{item:thn:char.conn.alm.ell.lie:loc} are indeed equivalent, so it will not matter which phrasing is used. 

  We induct on the length $\mathrm{max}$ of the closed derived series
  \begin{equation*}
    \bG_{sol}=
    \bG_{sol}^{((0))}
    \ge
    \bG_{sol}^{((1))}
    \ge
    \cdots
    \ge
    \bG_{sol}^{((\mathrm{max}))}
    \gneq
    \{1\}
  \end{equation*}
  with \Cref{th:no.triv.wt} providing the abelian-radical base case.
  
  \begin{enumerate}[label={},wide]
  \item \textbf{\Cref{item:thn:char.conn.alm.ell.lie:glob}/\Cref{item:thn:char.conn.alm.ell.lie:loc}: $\left(\Leftarrow\right)$} \Cref{pr:gmodl2g} delivers the induction step, taking into account the fact that \emph{compact} normal subgroups do not affect almost-ellipticity by \Cref{le:alm.ell.perm}\Cref{item:le:alm.ell.perm:quot.by.cpct}. 
    
  \item \textbf{\Cref{item:thn:char.conn.alm.ell.lie:glob}/\Cref{item:thn:char.conn.alm.ell.lie:loc}: $\left(\Rightarrow\right)$} The induction hypothesis reduces the problem to proving that if $\bG$ is almost-elliptic, then $\bK$ operates on the non-compact bottom layer $\left(\fg^{((\mathrm{max}-1))}_{sol}/\fg^{((\mathrm{max}))}_{sol}\right)_{nc}$ without trivial weights. This, in turn, follows from \Cref{th:no.triv.wt,th:ext.by.rd.perm}.

  \item \textbf{\Cref{item:thn:char.conn.alm.ell.lie:auto.opn}} We can assume via \Cref{le:alm.ell.perm}\Cref{item:le:alm.ell.perm:quot.by.cpct} that
    \begin{equation*}
      \bG_{sol}^{((\mathrm{max}))}
      \cong
      \bR^d\times \bT^e
      \quad
      \left(\text{\cite[Theorem 4.2.4]{de}}\right)
    \end{equation*}
    is in fact a vector group by quotienting out its unique maximal compact subgroup $\bT^e$. The induction step then hypothesizes $\bG/\bG_{sol}^{((\mathrm{max}))}$ openly almost-elliptic with maximal compact subgroups $\bK\le \bG$ acting trivial-weight-freely on that vector group. Now, the elements of $\bG$ which
    \begin{itemize}[wide]
    \item have elliptic image in $\bG/\bG_{sol}^{((\mathrm{max}))}$;

    \item and have no 1-eigenvectors in $\bG_{sol}^{((\mathrm{max}))}$
    \end{itemize}
    are elliptic by \Cref{th:no.triv.wt} (or \Cref{item:thn:char.conn.alm.ell.lie:glob} of the present statement), and constitute an open dense set by the induction hypothesis.  \qedhere
  \end{enumerate}
\end{thn:char.conn.alm.ell.lie}

\begin{remark}\label{re:solv.bad.exp}
  In comparing and passing back-and-forth between \Cref{thn:conn.cpct.by.ss.solv,th:no.triv.wt} on the one hand and \Cref{thn:char.conn.alm.ell.lie} on the other, (non-)simple connectedness raises some subtleties that in might be worthwhile to note. 
  
  The universal cover $\widetilde{\bG}\xrightarrowdbl{}\bG$ of a connected Lie group $\bG$ as in \Cref{thn:char.conn.alm.ell.lie} will be of the form $\bL\rtimes \bK$, thus fitting into the framework of \Cref{thn:conn.cpct.by.ss.solv} with connected, simply-connected, solvable $\bL$ and compact, connected, simply-connected, semisimple $\bK$. We have
  \begin{equation*}
    \bG
    \cong
    \left(\bL\rtimes \bK\right)/\widetilde{\bD}
    =
    \widetilde{\bG}/\widetilde{\bD}
    ,\quad
    \text{central discrete }\widetilde{\bD}\le \widetilde{\bG},
  \end{equation*}
  and the intersection $\bD:=\widetilde{\bD}\cap \bL$ is central discrete in $\bL$ as well as $\bK$-invariant. 
  
  In general, there will be infinitely many choices for an abelian Lie subalgebra $\bR^d\cong \fa\le \fl:=Lie(\bL)$ that will ``linearize'' a discrete central subgroup $\bD\le \bL$ of a solvable simply-connected Lie group $\bL$ in the sense that
  \begin{equation*}
    \bD=\exp\left(\text{some lattice in }\fa\right). 
  \end{equation*}
  This is in evidence in the examples usually given \cite[Exercise II.B.4]{helg_dglgssp} to illustrate exponential-function pathologies (non-bijectivity) for simply-connected solvable Lie groups: said example is $\bL\cong \bR^2\rtimes \bR$ for an action
  \begin{equation}\label{eq:r.by.rot}
    \bR
    \xrightarrowdbl{\quad}
    \bR/\bZ
    \cong
    \bS^1
    \cong
    \mathrm{SO}(2)\le \mathrm{GL}(\bR^2).
  \end{equation}
  The center of $\bL$ consists precisely of the kernel $\bZ$ of \Cref{eq:r.by.rot}, and all conjugates of the non-normal semidirect second factor $\bR\le \bL$ contain it. It should be clear how to construct ample supplies of such examples: set $\bL\cong \bR^s\rtimes \bR^d$ for actions $\bR^d \circlearrowright \bR^s$ that factor through torus quotients $\bR^d\xrightarrowdbl{}\bT^d$.

  Despite the multiplicity of $\fa$ containing $\bD$ as a lattice, there will always be (at least) one such that is in addition $\bK$-invariant: \Cref{pr:lc.alm.conn.max.cpct.inv}.  
\end{remark}


The following observation is by now a simple matter of unwinding some of the (otherwise non-trivial) structure theory for locally compact groups. 



\begin{proposition}\label{pr:lc.alm.conn.max.cpct.inv}
  A compact group $\bK$ acting continuously on a locally compact almost-connected group $\bG$ leaves invariant some maximal compact subgroup thereof. 
\end{proposition}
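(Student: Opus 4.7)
The plan is to form the semidirect product $\bH := \bG \rtimes \bK$ coming from the given action, and to exploit the conjugacy of maximal compact subgroups in $\bH$. Verifying that $\bH$ is locally compact and almost-connected is routine: the identity component contains $\bG_0 \rtimes \bK_0$ (well-defined because $\bG_0$ is characteristic in $\bG$, hence $\bK_0$-invariant), and the quotient is a continuous image of $(\bG/\bG_0)\times(\bK/\bK_0)$, hence compact. With $\bH$ almost-connected and locally compact, the Mostow theory cited in the paper (\cite{zbMATH00540631}) gives that $\bH$ has maximal compact subgroups, all mutually conjugate, and that every compact subgroup of $\bH$ is contained in at least one.

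Apply this to the compact subgroup $\{1\}\times\bK\le\bH$: it sits inside some maximal compact $\bN\le\bH$. Set $\bM:=\bN\cap\bG$. Then $\bM$ is compact, and since $\bG\trianglelefteq\bH$ and $\bK\subseteq\bN$, conjugation by $\bK$ preserves both $\bG$ and $\bN$ and hence their intersection $\bM$; that is, $\bM$ is $\bK$-invariant in the original sense.

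The single substantive step is to show $\bM$ is \emph{maximal} compact in $\bG$. Pick any maximal compact $\bM_0\le \bG$ (which exists, by the same structure theory applied to $\bG$). Regarded as a compact subgroup of $\bH$, $\bM_0$ is $\bH$-conjugate into $\bN$: there exists $h\in\bH$ with $h\bM_0 h^{-1}\subseteq\bN$. Because $\bG$ is normal in $\bH$, inner conjugation by $h$ restricts to an automorphism of $\bG$, so $h\bM_0 h^{-1}$ is itself maximal compact in $\bG$; and it is contained in $\bN\cap\bG=\bM$. A compact subgroup of $\bG$ containing a maximal compact of $\bG$ must coincide with it, so $\bM=h\bM_0 h^{-1}$ is maximal compact in $\bG$, finishing the proof. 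The only mildly subtle ingredient is the last observation — normality of $\bG$ in $\bH$ is precisely what upgrades ``maximal compact in $\bH$'' conjugacy to ``maximal compact in $\bG$'' information after intersecting with $\bG$.
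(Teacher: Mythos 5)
Your proof is correct, and it diverges from the paper's at exactly the substantive point. Both arguments begin identically: form $\bH=\bG\rtimes\bK$, check it is locally compact and almost-connected, choose a maximal compact subgroup $\bN\le\bH$ containing $\{1\}\times\bK$ (the paper cites \cite[Lemma 3.10(i)]{zbMATH00540631} for this), and observe that $\bM:=\bN\cap\bG$ is compact and $\bK$-invariant. Where you differ is in proving that $\bM$ is \emph{maximal} compact in $\bG$: you invoke the full conjugacy theorem for maximal compact subgroups of the almost-connected locally compact group $\bH$, conjugating a maximal compact $\bM_0\le\bG$ into $\bN$ and using normality of $\bG\trianglelefteq\bH$ to see that the conjugate is again maximal compact in $\bG$ and lands inside $\bM$, forcing equality. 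The paper instead first reduces to Lie quotients via \cite[\S 4.6, Theorem]{mz}, then uses \cite[Theorem C]{zbMATH00540631} to get homeomorphisms $\bG/\bM\cong\widetilde{\bG}/\widetilde{\bM}\cong\bR^d$ and $\bG/\bM\cong\bR^e\times\overline{\bM}/\bM$ for a maximal compact $\overline{\bM}\supseteq\bM$, and concludes $\bM=\overline{\bM}$ from the nonvanishing of top-degree $\bZ/2$-cohomology of compact manifolds. Your route is shorter and avoids both the reduction to the Lie case and the cohomological dimension count, at the cost of using conjugacy of maximal compacts in $\bH$ as a black box and of needing $\bG$ itself to possess maximal compact subgroups (harmless here, since $\bG$ is almost-connected). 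The paper's more laborious argument has a side benefit it exploits later: it isolates that only the normality of $\bG\trianglelefteq\widetilde{\bG}$ and the identity $\widetilde{\bG}=\bG\cdot\widetilde{\bM}$ are used, yielding the intersection statement \Cref{eq:int.norm.mc} that feeds into \Cref{le:norm.by.g0}, where the subgroup being intersected is not assumed almost-connected and your conjugation argument would not apply verbatim. For the proposition as stated, though, your proof is complete and valid.
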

\begin{proof}
  Consider the (still locally compact, almost-connected) semidirect product $\widetilde{\bG}:=\bG\rtimes \bK$. By \cite[Lemma 3.10(i)]{zbMATH00540631} it has a maximal compact subgroup $\widetilde{\bM}$, which may as well be assumed to contain $\bK$; it follows that
  \begin{equation*}
    \widetilde{\bM}
    =
    \bM\cdot \bK
    \cong
    \bM\rtimes \bK
    ,\quad
    \bM:=\widetilde{\bM}\cap \bG,
  \end{equation*}
  and I claim that $\bM$ is maximal compact in $\bG$; the conclusion will then follow from the $\bK$-invariance of $\bM$. It thus remains to verify the claimed maximality of the compact subgroup $\bM\le \bG$. The only hypotheses needed will be
  \begin{itemize}[wide]
  \item the normality of $\bG\trianglelefteq\widetilde{\bG}$ (the semidirect-product decomposition of the larger group being irrelevant);

  \item and the fact that $\widetilde{\bM}\le \widetilde{\bG}$ is maximal compact with $\widetilde{\bG}=\bG\cdot \widetilde{\bM}$. 
  \end{itemize}

  
  Recall first \cite[\S 4.6, Theorem]{mz} that $\widetilde{\bG}$ is \emph{pro-lie} in the sense of \cite[\S 1]{hm_pro-lie-struct}: the inverse limit of its Lie quotients; in particular $\bG$ is the inverse limit of its Lie quotients $\bK$-equivariantly, so it will suffice, for our present purposes, to assume $\bG$ Lie to begin with. Now,
  \begin{equation}\label{eq:grd}
    \bG/\bM
    \quad\cong\quad
    \bG\cdot \widetilde{\bM}/\widetilde{\bM}
    =
    \widetilde{\bG}/\widetilde{\bM}
    \quad\cong\quad
    \bR^d
    \quad
    \left(\text{homeomorphism}\right)
  \end{equation}
  for some $d\in \bZ_{\ge 0}$ by \cite[Theorem C]{zbMATH00540631} (applicable to almost-connected locally compact groups, as observed in the course of the proof of \cite[Lemma 3.10]{zbMATH00540631}). On the other hand, let $\overline{\bM}\le \bG$ be maximal compact containing $\bM$ (one such does exist by \cite[Lemma 3.10(i)]{zbMATH00540631}). The same \cite[Theorem C]{zbMATH00540631} gives
  \begin{equation}\label{eq:gre}
    \bG/\bM
    \quad\cong\quad
    \bG/\overline\bM\times \overline{\bM}/\bM
    \quad\cong\quad
    \bR^e\times \left(\text{compact manifold }\overline{\bM}/\bM\right)
    ,\quad
    e\in \bZ_{\ge 0}. 
  \end{equation}
  Compact boundary-less $(d-e)$-manifolds have non-trivial $\bZ/2$-valued cohomology in the top degree $d-e$ \cite[Corollary VI.7.14]{bred_gt_1997}, so \Cref{eq:grd,eq:gre} jointly imply that $\bM=\overline{\bM}$. 
\end{proof}

\begin{remarks}\label{res:inv.cpct.sbgps}
  \begin{enumerate}[(1),wide]
  \item There are many variations on the theme of \Cref{pr:lc.alm.conn.max.cpct.inv} in the literature: 
    \begin{itemize}[wide]
    \item Per \cite[\S 6, Lemme]{SB_1956-1958__4__73_0} (or \cite[Exercise VI.A.8(i)]{helg_dglgssp}, or \cite[Proposition 2.14]{hm_pro-lie-struct}), compact automorphism groups of semisimple Lie groups leave invariant maximal compact subgroups thereof. The first two sources rely on a fixed-point result for actions on negatively-curved Riemannian manifolds \cite[Theorem I.13.5]{helg_dglgssp}. 

    \item Cf. also \cite[Corollary X.1]{hm_split}, ``straightening'' a direct product decomposition $\bR^d\times \left(\text{compact}\right)$ into a $\bK$-invariant one whenever a compact group $\bK$ operates on such a product.
      
    \end{itemize}

  \item The case $\bG\cong V\rtimes \bM$ with $V\cong \bR^d$ and the compact group $\bM$ carrying $\bK$-actions and $\bM$ is represented on $V$ $\bK$-equivariantly can be cast as an instance of cohomology vanishing for compact groups.

    denote the $\bK$-actions on $V$ and $\bM$ by left-hand superscripts. Defining
    \begin{equation*}
      \tensor*[^s]{(0,g)}{}
      =:
      \left(v_{s,g},\tensor*[^s]{g}{}\right)
      ,\quad
      s\in \bK
      ,\quad
      v_{\bullet}\in V
      \quad\text{and}\quad
      g\in \bM,
    \end{equation*}
    a straightforward computation shows that
    \begin{equation*}
      \bK
      \ni s
      \xmapsto{\quad\varphi\quad}
      \left(\bM\ni g
        \xmapsto{\quad}
        v_{\tensor*[^s]{g}{},s}
        \in V
      \right)    
    \end{equation*}
    is a \emph{1-cocycle} \cite[\S 5.1]{ser_gal}
    \begin{equation*}
      \varphi
      \in
      Z^1\left(\bK,\ Z^1\left(\bM,\ V\right)\right),
    \end{equation*}
    with $\bK$ acting on the space of (continuous) 1-cocycles $\bM\to V$ by
    \begin{equation*}
      \tensor*[^s]{\psi}{}
      :=
      \tensor*[^s]{\left(\psi(\tensor*[^{s^{-1}}]{\bullet}{})\right)}{}
      ,\quad
      s\in \bK
      ,\quad
      \psi\in Z^1(\bM,V). 
    \end{equation*}
    The usual \cite[I, Theorem 2.3]{moore-ext} Haar averaging technique for compact-group cohomology shows that cocycles are coboundaries, so there must be some $v\in V$ with 
    \begin{equation*}
      \forall\left(s\in \bK\right)
      \forall\left(g\in \bM\right)
      \quad:\quad
      \tensor*[^s]{(v-gv,\ g)}{}
      =
      \left(\tensor*[^s]{v}{}-\tensor*[^s]{g}{}\tensor*[^s]{v}{},\ \tensor*[^s]{g}{}\right);
    \end{equation*}
    or: the (maximal compact) conjugate
    \begin{equation*}
      \Ad_{(v,1)}\bM
      =
      \left\{\left(v-gv,\ g\right)\ :\ g\in \bM\right\}
      \le
      V\rtimes \bM
    \end{equation*}
    of $\bM\le V\rtimes \bM$ is $\bK$-invariant.
  \end{enumerate}
\end{remarks}

We observed in the course of the proof of \Cref{pr:lc.alm.conn.max.cpct.inv} that
\begin{equation}\label{eq:int.norm.mc}
  \left.
    \begin{aligned}
      \bG&\quad\text{locally compact almost-connected}\\
      \bH&\trianglelefteq \bG\quad\text{normal}\\
      \bM&\le \bG\quad\text{maximal compact}
    \end{aligned}
  \right\}
  \quad\xRightarrow{\quad}\quad
  \bM\cap \bH\le \bH\quad\text{maximal compact}
\end{equation}
The analogous statement holds \cite[Lemma 3.10(iv)]{zbMATH00540631} for closed, possibly non-normal $\bH\le \bG$ containing $\bG_0$. We record in \Cref{le:norm.by.g0}, for completeness and future reference, a common generalization. 

\begin{lemma}\label{le:norm.by.g0}
  Let $\bG$ be a locally compact almost-connected group and $\bM\le \bG$ maximal compact.

  Whenever the closed subgroup $\bH\le \bG$ is normalized by the identity component $\bG_0$, the intersection $\bH\cap \bM\le \bH$ is maximal compact. 
\end{lemma}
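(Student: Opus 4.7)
The plan is to combine the two special cases already recorded in the paper (the normal case \eqref{eq:int.norm.mc} and the ``contains $\bG_0$'' case \cite[Lemma 3.10(iv)]{zbMATH00540631}) by interposing the normalizer of $\bH$.

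Set $N:=N_{\bG}(\bH)$, the normalizer of $\bH$ in $\bG$. The hypothesis that $\bG_0$ normalizes $\bH$ means precisely that $\bG_0\le N$, and $N$ is closed in $\bG$ because $\bH$ is. Consequently $N$ is itself locally compact almost-connected, with identity component $N_0=\bG_0$, and by construction $\bH\trianglelefteq N$.

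The proof then proceeds in two steps. First, applying \cite[Lemma 3.10(iv)]{zbMATH00540631} to the closed subgroup $N\le \bG$, which contains $\bG_0$, one gets that $\bM\cap N$ is maximal compact in $N$. Second, applying \eqref{eq:int.norm.mc} with the ambient locally compact almost-connected group $N$, its normal subgroup $\bH\trianglelefteq N$, and the maximal compact subgroup $\bM\cap N\le N$ furnished by the preceding step, one concludes that
\begin{equation*}
  (\bM\cap N)\cap \bH \;=\; \bM\cap \bH
\end{equation*}
is maximal compact in $\bH$, which is what was to be shown.

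There is no real obstacle here; the only ``move'' is the observation that the normalizer $N_{\bG}(\bH)$ simultaneously (i) contains $\bG_0$, so that the intersection with $\bM$ can be controlled inside it, and (ii) has $\bH$ as a \emph{normal} subgroup, so that the normal-subgroup case then applies within $N$.
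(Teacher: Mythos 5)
Your proof is correct and follows essentially the same route as the paper's: interpose a closed intermediate subgroup that contains $\bG_0$ and in which $\bH$ is normal, apply \cite[Lemma 3.10(iv)]{zbMATH00540631} to that subgroup, and then invoke the normal-subgroup observation \Cref{eq:int.norm.mc} inside it. The only (immaterial) difference is the choice of intermediate group: you take $N_{\bG}(\bH)$, whereas the paper takes $\overline{\bG_0\cdot \bH}$, which sits inside your $N_{\bG}(\bH)$ and plays the identical role.
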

\begin{proof}
  Apply \cite[Lemma 3.10(iv)]{zbMATH00540631} to the $\bG_0$-containing closed subgroup $\widetilde{\bH}:=\overline{\bG_0\cdot \bH}$ to conclude that $\bM\cap \widetilde{\bH}\le \widetilde{\bH}$ is maximal compact, and then again \Cref{eq:int.norm.mc} with $\widetilde{\bH}$ (where $\bH$ is normal) in place of $\bG$. 
\end{proof}

\begin{remark}\label{re:bagley.ea}
  The remarks preliminary to \cite[Lemma 3.10]{zbMATH00540631} refer to \cite[Lemma 1.8]{zbMATH00025545} as a slight generalization of \cite[Lemma 3.10(iv)]{zbMATH00540631}, but note that the former result deduces the maximality of $\bM\cap \bH\le \bH$ among compact \emph{normal} subgroups provided $\bH\trianglelefteq \bG$ is closed normal and $\bM\le \bG$ is maximal compact and again normal. So this is yet another variant of the result. 
\end{remark}

\addcontentsline{toc}{section}{References}

\def\polhk#1{\setbox0=\hbox{#1}{\ooalign{\hidewidth
  \lower1.5ex\hbox{`}\hidewidth\crcr\unhbox0}}}

\Addresses

\end{document}